\documentclass[12pt,letterpaper,reqno]{amsart}

\usepackage{times}
\usepackage[T1]{fontenc}
\usepackage{mathrsfs}
\usepackage{latexsym}
\usepackage[dvips]{graphics}
\usepackage{epsfig}
\usepackage{amsmath,amsfonts,amsthm,amssymb,amscd, hyperref}
\input amssym.def
\input amssym.tex

\addtolength{\textwidth}{2cm} \addtolength{\hoffset}{-1cm}
\addtolength{\marginparwidth}{-1cm} \addtolength{\textheight}{2cm}
\addtolength{\voffset}{-1cm}

\newcommand{\supp}{{\rm supp}}




\newcommand{\hphi}{\widehat{\phi}}  





\newcommand\be{\begin{equation}}
\newcommand\ee{\end{equation}}
\newcommand\bea{\begin{eqnarray}}
\newcommand\eea{\end{eqnarray}}
\newcommand\bi{\begin{itemize}}
\newcommand\ei{\end{itemize}}
\newcommand\ben{\begin{enumerate}}
\newcommand\een{\end{enumerate}}
\newcommand\bc{\begin{center}}
\newcommand\ec{\end{center}}
\newcommand\ba{\begin{array}}
\newcommand\ea{\end{array}}




\newcommand{\R}{\ensuremath{\mathbb{R}}}











\newtheorem{thm}{Theorem}[section]

\newtheorem{lem}[thm]{Lemma}
\newtheorem{prop}[thm]{Proposition}

\newtheorem{defi}[thm]{Definition}

\newtheorem{rek}[thm]{Remark}

\newcommand{\twocase}[5]{#1 \begin{cases} #2 & \text{{\rm #3}}\\ #4
&\text{{\rm #5}} \end{cases}   }

\newcommand{\threecase}[7]{#1 \begin{cases} #2 & \text{{\rm #3}}\\ #4
&\text{{\rm #5}}\\ #6 & \text{{\rm #7}} \end{cases}   }














\numberwithin{equation}{section}

\begin{document}

\title[Low-lying Zeros of Number Field $L$-functions]{Low-lying Zeros of Number Field $L$-functions}

\author{Steven J. Miller}\email{Steven.J.Miller@williams.edu}
\address{Department of Mathematics and Statistics, Williams College, Williamstown, MA 01267}

\author{Ryan Peckner}\email{rpeckner@berkeley.edu}
\address{Department of Mathematics, University of California, Berkeley, CA 94720}

\subjclass[2000]{11M26 (primary), 11M41, 15A52 (secondary).}
\keywords{$1$-level density, Hecke characters, low-lying
zeros, symmetry, CM-fields, class number, lower order terms}

\date{\today}

\thanks{This work was done at the 2009 SMALL Undergraduate Research Project at Williams College, funded by NSF GRANT DMS0850577 and Williams College; it is a pleasure to thank them for their support. We also thank Michael Rosen and the participants of the 2009 YMC at Ohio State for many enlightening conversations. The first named author was also partly supported by NSF grant DMS0600848.}

\begin{abstract} One of the most important statistics in studying the zeros of $L$-functions is the 1-level density, which measures the concentration of zeros near the central point. Fouvry and Iwaniec \cite{FI} proved that the 1-level density for $L$-functions attached to imaginary quadratic fields agrees with results predicted by random matrix theory. In this paper, we show a similar agreement with random matrix theory occurring in more general sequences of number fields. We first show that the main term agrees with random matrix theory, and similar to all other families studied to date, is independent of the arithmetic of the fields. We then derive the first lower order term of the 1-level density, and see the arithmetic enter.
\end{abstract}


\maketitle

\setcounter{equation}{0}


\section{Introduction}

\subsection{Background}

While studying class numbers in the early 1970s, Montgomery made the remarkable observation that the zeros of the Riemann zeta function appear to be correlated in precisely the same way as the eigenvalues of Gaussian random matrices \cite{Mon}. This was based on a chance encounter with Freeman Dyson, who had calculated the eigenvalue pair correlation function for the Gaussian Unitary Ensemble and found it to be
\[
1 - \left(\frac{\sin \pi x}{\pi x}\right)^{2},
\]
exactly the distribution conjectured by Montgomery for the zeros of the zeta function. Extensive numerical computations by Odlyzko \cite{Od1,Od2} support this unexpected correspondence to impressive heights on the critical line.

Attempts to explain this connection rigorously in the number field case have thus far been unsuccessful. However, groundbreaking theoretical work
by Katz and Sarnak has put this goal within reach in the function field setting. They proved that, as one averages over the zeros of suitable families of $L$-functions obtained from geometry, the scaling limit of the spacing measures of the normalized zeros tends to a `universal measure' which is the limit of the spacing measures of the eigenvalues of Gaussian random matrices (see \cite{KaSa1,KaSa2} for details, as well as the survey article \cite{FM} for a description of the development of random matrix theory from nuclear physics to number theory). Moreover, their work predicts that associated to an appropriate family $\mathcal{E}$ of elliptic curves over $\mathbb{Q}$ is a classical compact matrix group $\mathcal{G}(\mathcal{E})$ (which may be viewed as a group of random matrices under normalized Haar measure) in such a way that for any compactly supported even Schwartz function $h$ on $\mathbb{R}$, we have
\begin{equation}\label{eq: density conjecture}
\int_{\mathbb{R}}h(x)W_{\mathcal{G}(\mathcal{E})}(x)dx = \lim_{X \to \infty}\left(1/\sum_{n \leq X}{|\mathcal{E}_{n}|}\right)\sum_{n \leq X, E \in \mathcal{E}_{n}}h\left(\gamma_{E,j}\frac{\log(N_{E})}{2\pi}\right)
\end{equation}
where $N_{E}$ denotes the conductor of the curve $E$,
\[
\mathcal{E}_{n} = \{\mathbb{Q}-\!\text{isogeny classes of} \ E \in \mathcal{E} : N_{E} = n\}
\]
and $1/2 \pm i\gamma_{E,j}$ are the zeros of $L(S, E/\mathbb{Q})$ (normalized to have functional equation $s \to 1-s$). The distribution $W_{\mathcal{G}(\mathcal{E})}$ is canonically associated to the scaling limit of a classical compact group, and gives the density of the normalized spacings between the eigenangles. Katz and Sarnak \cite{KaSa1,KaSa2} showed that for test functions $\phi$ with Fourier transforms supported in $(-1,1)$, the one-level densities of the scaling limits of the classical compact groups are given by\footnote{For the purposes of this paper, the following formulas suffice as we only need to know the one-level densities when $\supp(\hphi) \subset (-1,1)$. See \cite{KaSa1,KaSa2} for determinantal formulas for the $n$-level densities for arbitrary support.}
\begin{eqnarray}\label{eq: eigenvalue distributions}
\int{\phi(x)W_{\rm SO(even)}(x)}dx & = & \widehat{\phi}(0) + \frac{1}{2}\phi(0) \nonumber\\
\int{\phi(x)W_{\rm SO(odd)}(x)}dx & = & \widehat{\phi}(0) + \frac{1}{2}\phi(0) \nonumber\\
\int{\phi(x)W_{\rm O}(x)}dx & = & \widehat{\phi}(0) + \frac{1}{2}\phi(0) \nonumber\\
\int{\phi(x)W_{\rm USp}(x)}dx & = & \widehat{\phi}(0) - \frac{1}{2}\phi(0) \nonumber\\
\int{\phi(x)W_{\rm U}(x)}dx & = & \widehat{\phi}(0).
\end{eqnarray}
The quantity on the right side of $\eqref{eq: density conjecture}$, which due to the normalization by $\displaystyle\frac{\log(N_{E})}{2\pi}$ measures the low-lying zeros of the $L$-functions, is known as the 1-level density for the family. Thus, this conjecture is often referred to as the `density conjecture'.

One expects that an analogue of this conjecture should hold for all suitable families of automorphic $L$-functions, not just those associated to elliptic curves. Indeed, the density conjecture has been verified (up to small support) for a wide variety of families, including all Dirichlet characters, quadratic Dirichlet characters, elliptic curves, weight $k$ level $N$ cuspidal newforms, symmetric powers of GL(2) $L$-functions, and certain families of GL(4) and GL(6) $L$-functions; see \cite{DM1, DM2, HR, HM, ILS, KaSa2, Mil3, OS, RR1, Ro, Rub, Yo2}. We have two goals in this paper. The first is to verify the density conjecture for as large a class of test functions as possible for $L$-functions coming from a patently different situation than that of elliptic curves, namely the $L$-functions of ideal class characters of number fields. As in all other families studied to date, the main term is independent of the arithmetic of the family. Our second goal is to see the effects of the arithmetic in the lower order terms, thereby distinguishing different families.

To make things precise, let $\mathcal{F}$ be a family of number fields, and define for each field $K \in \mathcal{F}$ the $1$-level density
\be
D_{\widehat{\mathcal{CL}(K)}}(\phi)\ =\ \frac{1}{h_{K}}\sum_{\chi \in \widehat{\mathcal{CL}(K)}}\sum_{\substack{\gamma_{\chi} \\ L(1/2 + i\gamma_{\chi},\chi) = 0}}\phi\left(\gamma_{\chi}\frac{\log\Delta_{K}}{2\pi}\right)
\ee
where $\phi$ is an even Schwartz function whose Fourier transform has compact support, $h_K$ is the class number of $K$, $\Delta_{K}$ is the absolute value of its discriminant, and the outer sum runs over the characters of the ideal class group $\mathcal{CL}(K)$ of $K$. Again, due to the rapid decay of $\phi$ and the scaling factor $\frac{\log\Delta_{K}}{2\pi}$, only the low-lying zeros contribute to this sum in the limit as $\Delta_{K} \to \infty$. Since for a given number $X$  there are only finitely many number fields of (absolute value of) discriminant less than $X$, the discriminant must tend to infinity in any infinite family of number fields. Moreover, ordering the family $\mathcal{F}$ according to the increasing parameter $\Delta_K$,  we may consider the limit
\[
D_{\mathcal{F}}(\phi) = \lim_{\Delta_{K} \to \infty}D_{\widehat{\mathcal{CL}(K)}}(\phi),
\]
and this is independent of rearranging fields which have the same value of $\Delta_{K}$. However, there is no good reason to expect this limit to exist if $\mathcal{F}$ is just an arbitrary collection of number fields; thus we reserve the term `family' for a collection $\mathcal{F}$ of number fields whose members have similar arithmetic properties and for which the $1$-level density actually exists. This is by no means an attempt at an actual definition of the term `family', which is an ongoing subject, but it suffices for our purposes, wherein the common arithmetic origin of our fields will be obvious.

Among the wide variety of families for which the density conjecture has been investigated, few have arisen from the number field context. In fact, to our knowledge, the only work to date analyzing the 1-level density for Hecke characters is that of Fouvry-Iwaniec \cite{FI}, who showed that, in the notation above, the $1$-level density $D_{\mathcal{F}}(\phi)$ for $\mathcal{F}$ the family $\mathbb{Q}(-D)$ with $-D$ a fundamental discriminant is given by the symplectic distribution. In addition, recent unpublished work of Andrew Yang \cite{Ya} indicates that the 1-level density for the Dedekind zeta functions of cubic fields is governed by the symplectic distribution. In this paper, we extend the results of \cite{FI} to the family of all CM-fields over a fixed totally real field (see below for definitions). Since infinitely many such families exist, we also derive the first lower order term of the 1-level density (under certain conditions), which allows us to distinguish different families by their arithmetic.

\subsection{1-level density}

In this paper, $K$ will denote a number field of fixed degree $N$ over $\mathbb{Q}$, $h_K$ its class number, $\Delta_K$ the absolute value of its discriminant, $r_1$ and $r_2$ the number of real resp. half the number of complex embeddings\footnote{Thus $r_1 + 2r_2=N$.}, and $R_K$ the regulator.\\ \ \\ \texttt{\emph{Although $K$ will vary, we will generally omit the subscripts from our notation; thus} $h = h_K$, \emph{et cetera.}}\ \\ \ \\ Let $\chi$ be a character of the ideal class group of $K$, and let $\phi$ be an even function in the Schwartz space $\mathcal{S}(\mathbb{R})$ such that the function $\widehat{\phi}$ has compact support; here $\widehat{\phi}$ represents the Fourier Transform\footnote{Note other works may use a different normalization, using $e^{-ixy}$ instead of $e^{-2\pi i xy}$.} \be \widehat{\phi}(y) \ = \ \int_{-\infty}^\infty \phi(x) e^{-2\pi i xy}dx. \ee Assume the generalized Riemann hypothesis, so we may write the zeros of $L(s,\chi)$ as $1/2+i\gamma_\chi$ with $\gamma_\chi\in\R$. Then Weil's explicit formula, as simplified by Poitou, reads \cite{P,BDF,La1}
\begin{eqnarray}\label{eq:expformulapoitou}
\displaystyle\sum_{\gamma_{\chi}}{\phi\left(\gamma_{\chi}\frac{\log\Delta}{2\pi}\right)} &= & \frac{1}{\log\Delta}\left[4\delta_{\chi}\int_{0}^{\infty}{\widehat{\phi}\left(\frac{x}{\log\Delta}\right)\text{cosh}(x/2)dx}\right. \nonumber\\  && + \widehat{\phi}(0)(\log\Delta - N\gamma_{{\rm EM}} -N\log8\pi - \displaystyle\frac{r_{1}\pi}{2}) \nonumber\\
&& - \displaystyle\sum_{\mathfrak{p}}\log N\mathfrak{p}\sum_{m = 1}^{\infty}\frac{\widehat{\phi}\left(m\displaystyle\frac{\log N\mathfrak{p}}{\log\Delta}\right)}{N\mathfrak{p}^{m/2}}(\chi(\mathfrak{p})^{m} + \chi(\mathfrak{p})^{-m}) \nonumber\\
&& + r_{1}\int_{0}^{\infty}{\frac{\widehat{\phi}(0) - \widehat{\phi}(x)}{2\text{cosh}(x/2)}dx} + \left.N\int_{0}^{\infty}{\frac{\widehat{\phi}(0) - \widehat{\phi}(x)}{2\text{sinh}(x/2)}dx} \right],\ \ \
\label{eq:expform2}
\end{eqnarray}
where the sum on the left is over the imaginary parts $\gamma_{\chi}$ of the zeros of $L(s,\chi)$, the sum on the right is over the prime ideals of the ring of integers of $K$, $\gamma_{{\rm EM}}$ is the Euler-Mascheroni constant and $\delta_{\chi}$ is the indicator of the trivial character (i.e., it is 1 if $\chi$ is the trivial character and 0 otherwise). As is standard, we rescaled the zeros by $\log\Delta$ to facilitate applications to studying the zeros near the central point.

We now wish to average this formula over all characters $\chi$ of the ideal class group $\mathcal{CL}(K)$ of $K$. We denote its dual by $\widehat{\mathcal{CL}(K)}$, and note that its cardinality is the class number $h$. By $\chi(\mathfrak{p})$ we of course mean the value of $\chi$ on the ideal class of $\mathfrak{p}$.  For any non-zero integer $m$ and any prime $\mathfrak{p}$ of $K$ we have
\be \displaystyle\sum_{\chi \in \widehat{\mathcal{CL}(K)}}\chi(\mathfrak{p})^m\ =\ \Bigg\{ \begin{array}{rl}
  h &\text{if} \ \mathfrak{p} \ \text{is principal} \\
  h &\text{if} \ \mathfrak{p} \ \text{is not principal and} \ m\,|\,\text{ord}_{\mathcal{CL}(K)}(\mathfrak{p}) \\
  0 &\text{otherwise.}
       \end{array} \ee
Averaging the explicit formula over the family yields the one-level density
\begin{eqnarray}\label{eq:average}
& & D_{\widehat{\mathcal{CL}(K)}}(\phi) \ := \ \frac{1}{h}
\sum_{\chi \in \widehat{\mathcal{CL}(K)}}
\sum_{\substack{\gamma_{\chi} \\ L(1/2 + i\gamma_{\chi},\, \chi) = 0}}
\phi\left(\gamma_{\chi}\frac{\log\Delta}{2\pi}\right)\nonumber\\
&= & \frac{1}{\log\Delta}
\Bigg[ \frac{4}{h}\int_{0}^{\infty}{\widehat{\phi}\left(\frac{x}{\log\Delta}\right)\cosh\left(\frac{x}{2}\right)dx}
+  \widehat{\phi}(0)\cdot\left(\log \Delta - N\gamma_{{\rm EM}} -N\log 8\pi - \displaystyle\frac{r_{1}\pi}{2}\right)  \nonumber\\
& & -  2 \left( \displaystyle\sum_{\mathfrak{p} \ {\rm non-principal}}\log N\mathfrak{p}\sum_{\substack{m \geq 1\\ \mathfrak{p}^m \ \text{principal}}}  {\frac{\widehat{\phi}\left( m \frac{\log N\mathfrak{p}}{\log \Delta}\right)}{N\mathfrak{p}^{m/2}}}
 +  \sum_{\substack{\mathfrak{p} \ \text{principal}}}{\log N\mathfrak{p}}\sum_{m=1}^{\infty}{\frac{\widehat{\phi}\left(m \frac{\log N\mathfrak{p}}{\log \Delta}\right)}{N\mathfrak{p}^{m/2}}}  \right) \nonumber\\
&  & +  r_{1}\int_{0}^{\infty}{\frac{\widehat{\phi}(0) - \widehat{\phi}(x)}{2\cosh(x/2)}dx} + N\int_{0}^{\infty}{\frac{\widehat{\phi}(0) - \widehat{\phi}(x)}{2\sinh(x/2)}\ dx} \Bigg].
\end{eqnarray}
We wish to ascertain the behavior of this average as $\Delta \rightarrow \infty$.

We recall some relevant facts from algebraic number theory (see Chapter 4, Part 1 of \cite{La1} or \cite{Wa} for more details). A number field $K_0$ is called totally real if every embedding of $K_0$ into $\mathbb{C}$ has image contained in $\mathbb{R}$, i.e. $K_0$ is generated over $\mathbb{Q}$ by an algebraic number all of whose conjugates are real. On the other hand, a number field $K$ is called totally imaginary if \textit{no} embedding of $K$ into $\mathbb{C}$ has image contained in $\mathbb{R}$. A \textit{CM-field} is a totally imaginary number field which forms a quadratic extension of a totally real number field. This totally real field is unique and is denoted $K^+$. $K$ then takes the form $K = K^{+}(\sqrt{\beta})$, where $\beta$ is a square-free element of $\mathcal{O}_{K^+}$ which is totally negative, e.g. $\sigma(\beta) < 0$ for every embedding $\sigma: K^+ \hookrightarrow \mathbb{R}$. Any totally real field obviously has infinitely many CM-fields over it, and CM-fields form a rich and abundant class of number fields. Indeed, any finite abelian extension of $\mathbb{Q}$ is either totally real or is a CM-field (by the Kronecker-Weber theorem), and the abbreviation CM reflects the strong connection between CM-fields and the theory of abelian varieties with complex multiplication (see IV.18 of \cite{S} for details).

We now describe our family of number fields. Fix a totally real number field $K_{0}/\mathbb{Q}$ of class number one and degree $N$ over $\mathbb{Q}$, and let $\{K_{\Delta}\}$ be the family of all CM-fields for which $K_{\Delta}^+ = K_{0}$, ordered by (absolute value of) discriminant $\Delta$. Although it may be the case that several $K$ share the same value of $\Delta$, there are by standard results only finitely many which do (\cite{La1}, pg. 121), so their ordering is irrelevant. Each of these fields has degree $2N$ over $\mathbb{Q}$. We denote the class number of $K_\Delta$ by $h_\Delta$.

Define distributions $S_{1}(\Delta, \cdot), S_{2}(\Delta, \cdot)$ by
\begin{eqnarray}
S_{1}(\Delta, \phi) & \ := \ & -2\displaystyle\sum_{\mathfrak{p} \ {\rm non-principal}}\log N\mathfrak{p}\sum_{\substack{m \geq 2\\ \mathfrak{p}^m \ \text{principal}}}  {\frac{\widehat{\phi}\left(m \frac{\log N\mathfrak{p}}{\log \Delta}\right)}{N\mathfrak{p}^{m/2}}} \nonumber\\
S_{2}(\Delta, \phi) & := & -2\displaystyle\sum_{\mathfrak{p} \ {\rm principal}}\log N\mathfrak{p}\sum_{m = 1}^{\infty} {\frac{\widehat{\phi}\left(m \frac{\log N\mathfrak{p}}{\log \Delta}\right)}{N\mathfrak{p}^{m/2}}};
\end{eqnarray} note the $m$-sum for $S_1(\Delta,\phi)$ starts at $2$ and not $1$ because $\mathfrak{p}$ is not principal but $\mathfrak{p}^m$ is. In terms of this notation, \eqref{eq:average} yields

\begin{thm}[Expansion for the 1-level density]\label{thm:oneleveldensity} Notation as above, if $\phi$ is an even Schwartz function with $\supp(\widehat{\phi}) \subset (-\sigma, \sigma)$, then
\begin{eqnarray}\label{eq:density}
D_{\widehat{\mathcal{CL}(K)}}(\phi)  &\ :=\ & \displaystyle\frac{1}{h_{\Delta}}
\sum_{\chi \in \widehat{\mathcal{CL}(K_{\Delta})}}
\sum_{\substack{\gamma_{\chi} \\ L(1/2 + i\gamma_{\chi},\, \chi) = 0}}
\phi\left(\gamma_{\chi}\frac{\log\Delta}{2\pi}\right) \nonumber\\
&=& \frac{1}{\log\Delta}
\Bigg[ \frac{4}{h_{\Delta}}\int_{0}^{\infty}{\widehat{\phi}\left(\frac{x}{\log\Delta}\right)\cosh\left(\frac{x}{2}\right)dx} \nonumber\\
 & & \ + \ \widehat{\phi}(0)\cdot\left(\log \Delta - 2N\gamma_{{\rm EM}} -2N\log 8\pi\right) \nonumber\\
 & & \ +\ S_{1}(\Delta, \phi) + S_{2}(\Delta, \phi) + 2N\int_{0}^{\infty}{\frac{\widehat{\phi}(0) - \widehat{\phi}(x)}{2\sinh(x/2)}\ dx} \Bigg]. 
\end{eqnarray}
\end{thm}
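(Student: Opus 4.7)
The plan is to derive the stated expansion essentially by bookkeeping from the Weil--Poitou explicit formula \eqref{eq:expformulapoitou}. The starting point is to apply that formula separately to each $L(s,\chi)$ as $\chi$ ranges over $\widehat{\mathcal{CL}(K_{\Delta})}$, then sum over $\chi$ and divide by $h_{\Delta}$. The left-hand side becomes $D_{\widehat{\mathcal{CL}(K)}}(\phi)$ by definition. On the right-hand side, the $\widehat{\phi}(0)$ coefficient and the $\cosh(x/2)$ and $\sinh(x/2)$ integrals are independent of $\chi$ and survive unchanged under averaging; the $\delta_{\chi}$ term picks up a single nonzero contribution from the trivial character, producing the factor $\tfrac{4}{h_{\Delta}}\int_{0}^{\infty}\widehat{\phi}(x/\log\Delta)\cosh(x/2)\,dx$.

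The heart of the computation is the averaged prime sum. Using the orthogonality relation displayed before \eqref{eq:average},
\[
\frac{1}{h_{\Delta}}\sum_{\chi \in \widehat{\mathcal{CL}(K_{\Delta})}}\chi(\mathfrak{p})^{m}\ =\ \begin{cases} 1 & \text{if } \mathfrak{p}^{m} \text{ is principal},\\ 0 & \text{otherwise,}\end{cases}
\]
together with the analogous statement for $\chi(\mathfrak{p})^{-m}$ (whose average equals that of $\chi(\mathfrak{p})^{m}$ since $\chi \mapsto \overline{\chi}$ permutes $\widehat{\mathcal{CL}(K_{\Delta})}$), the averaged $m$-sum over powers of a fixed prime $\mathfrak{p}$ collapses to the sum over those exponents $m$ for which $\mathfrak{p}^{m}$ is principal. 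I then split the prime ideals into two classes: principal primes, for which every $m \geq 1$ contributes, producing $S_{2}(\Delta,\phi)$; and non-principal primes, for which the smallest contributing exponent is at least $2$, producing $S_{1}(\Delta,\phi)$. The factor $\chi(\mathfrak{p})^{m} + \chi(\mathfrak{p})^{-m}$ in \eqref{eq:expformulapoitou}, combined with the overall minus sign, accounts for the $-2$ in the definitions of $S_{1}$ and $S_{2}$.

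Finally, I specialize \eqref{eq:expformulapoitou} to the CM-field $K = K_{\Delta}$. Because $K_{\Delta}$ is totally imaginary, $r_{1} = 0$, which annihilates both the $-r_{1}\pi/2$ contribution to the $\widehat{\phi}(0)$ coefficient and the integral $r_{1}\int_{0}^{\infty}\frac{\widehat{\phi}(0)-\widehat{\phi}(x)}{2\cosh(x/2)}\,dx$. The degree of $K_{\Delta}$ over $\mathbb{Q}$ is $2N$ (the $N$ of the theorem statement is reserved for $[K_{0}:\mathbb{Q}]$), so each occurrence of the field degree in \eqref{eq:expformulapoitou} must be replaced by $2N$, producing $2N\gamma_{\mathrm{EM}}$, $2N\log 8\pi$, and the $2N$ coefficient of the surviving $\sinh$ integral.

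There is no genuine obstacle here: the proof is simply the explicit formula plus character orthogonality plus the basic arithmetic of CM-fields. The substantive work of the paper, which this theorem only sets up, lies in analyzing $S_{1}(\Delta,\phi)$ and $S_{2}(\Delta,\phi)$ carefully enough to identify the symplectic main term of $D_{\widehat{\mathcal{CL}(K)}}(\phi)$ and to extract its first lower-order term, where the arithmetic of the family enters.
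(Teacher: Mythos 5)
Your proposal is correct and takes essentially the same route as the paper: apply the Weil--Poitou explicit formula to each $L(s,\chi)$, average over $\widehat{\mathcal{CL}(K_{\Delta})}$ using character orthogonality so the prime sum collapses to exponents $m$ with $\mathfrak{p}^{m}$ principal (yielding the split into $S_{1}$ and $S_{2}$), and then specialize to the CM-field $K_{\Delta}$ with $r_{1}=0$ and degree $2N$. The only difference is cosmetic: you state the orthogonality relation in its correct form (the average of $\chi(\mathfrak{p})^{m}$ is $1$ exactly when $\mathfrak{p}^{m}$ is principal), which is what the paper actually uses in \eqref{eq:average} despite the slightly garbled divisibility condition in its displayed orthogonality statement.
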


Note that we've used $r_1 = 0$, since $K$ is totally imaginary.

\subsection{Main results}

Our first result is the following.

\begin{thm}\label{thm:main} Assume the Generalized Riemann Hypothesis for all Hecke $L$-functions.
Let $\phi$ be an even Schwartz function whose Fourier transform is supported in $(-1,1)$. Fix a normal, totally real number field $K_{0}/\mathbb{Q}$ of class number one and degree $N$ over $\mathbb{Q}$, and let $\{K_{\Delta}\}$ be the family of all CM-fields for which $K_{\Delta}^+ = K_{0}$, ordered by the absolute value of the discriminant $\Delta$. Then
\begin{equation}
D_{\widehat{\mathcal{CL}(K)}}(\phi)\ =\ \widehat{\phi}(0) - \frac{1}{2}\phi(0) + O\left(\frac{\log\log\Delta}{\log\Delta}\right),
\end{equation}
which implies that the one-level density agrees with the scaling limit of symplectic but not unitary or orthogonal matrices (see \eqref{eq: eigenvalue distributions}).
\end{thm}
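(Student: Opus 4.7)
I would begin from the expansion in Theorem~\ref{thm:oneleveldensity} and track each piece after division by $\log\Delta$. The term $\widehat{\phi}(0)\log\Delta/\log\Delta$ immediately yields the main term $\widehat{\phi}(0)$, while the constants $-2N\gamma_{{\rm EM}}-2N\log 8\pi$ and the finite integral $2N\int_0^\infty(\widehat{\phi}(0)-\widehat{\phi}(x))/(2\sinh(x/2))\,dx$ both give $O(1/\log\Delta)$. For the archimedean piece $(4/h)\int_0^\infty\widehat{\phi}(x/\log\Delta)\cosh(x/2)\,dx$, I would exploit the fact that $\supp(\widehat{\phi})\subset(-\sigma,\sigma)$ for some $\sigma<1$, so the integral is $\ll \log\Delta\cdot\Delta^{\sigma/2}$, and then invoke the effective lower bound $h\gg\sqrt{\Delta}/\log\log\Delta$. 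The latter holds here because $R_K$ is essentially $R_{K_0}$ on CM extensions and under GRH Littlewood gives $L(1,\chi)\gg 1/\log\log\Delta$; together this piece is $O(\Delta^{(\sigma-1)/2}\log\log\Delta)$, negligible.

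The crux of the proof is $S_1(\Delta,\phi)+S_2(\Delta,\phi)$, which together equal
\[
-2\sum_{\mathfrak{p}}\sum_{\substack{m\ge 1\\ \mathfrak{p}^m\text{ principal}}}\frac{\log N\mathfrak{p}}{N\mathfrak{p}^{m/2}}\widehat{\phi}\!\left(m\frac{\log N\mathfrak{p}}{\log\Delta}\right).
\]
My plan is to decompose each $\mathfrak{p}$ of $K$ according to the behavior of $\mathfrak{p}_0=\mathfrak{p}\cap\mathcal{O}_{K_0}$ in the CM extension $K/K_0$, and to extract the $-\tfrac{1}{2}\phi(0)\log\Delta$ from the $m=1$ contribution of inert-type primes. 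Since $K_0$ has class number one, every inert $\mathfrak{p}_0$ yields a principal prime $\mathfrak{p}=\mathfrak{p}_0\mathcal{O}_K$ of $K$ with $N\mathfrak{p}=N\mathfrak{p}_0^2$, so this inert-type $m=1$ contribution is
\[
-4\sum_{\mathfrak{p}_0\text{ inert}}\frac{\log N\mathfrak{p}_0}{N\mathfrak{p}_0}\widehat{\phi}\!\left(\frac{2\log N\mathfrak{p}_0}{\log\Delta}\right).
\]
Writing $\mathbf{1}_{\mathfrak{p}_0\text{ inert}}=(1-\chi(\mathfrak{p}_0))/2$ for the quadratic Hecke character $\chi=\chi_{K/K_0}$ of $K_0$ and applying the prime ideal theorem for $K_0$ to the untwisted half produces $-\tfrac{1}{2}\phi(0)\log\Delta$, up to an error $O(\log\log\Delta)$ coming from the $\chi$-twisted sum, which under GRH is bounded by $|L'(1,\chi)/L(1,\chi)|\ll\log\log\Delta$ via Littlewood.

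The remaining contributions to $S_1+S_2$ must be absorbed into the error. The $m=1$ sum over split-type and ramified-type principal primes is controlled by the Chebotarev density $1/h$ and the GRH lower bound on $h$, giving $O(\Delta^{(\sigma-1)/2}\log\Delta\log\log\Delta)$. The inert-type contributions at $m\ge 2$ are absolutely convergent via $\sum_{\mathfrak{p}_0}\log N\mathfrak{p}_0/N\mathfrak{p}_0^2<\infty$, and the $m\ge 3$ terms are bounded by standard geometric arguments. The subtlest piece is the $m=2$ contribution from split-type primes with $[\mathfrak{p}]\in\mathcal{CL}(K)[2]$, whose Chebotarev main term has size $(h_2/h)\log\Delta\cdot\phi(0)/2$ where $h_2=|\mathcal{CL}(K)[2]|$. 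Here I would invoke the genus-theoretic bound $h_2\le 2^t$ with $t$ the number of primes of $K_0$ ramifying in $K$; since $t\ll\log\Delta/\log\log\Delta$, this gives $h_2\le\Delta^{O(1/\log\log\Delta)}$, and combined with $h\gg\sqrt\Delta/\log\log\Delta$ yields $h_2/h=O(\Delta^{-c})$ for some $c>0$, comfortably inside the error.

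The main obstacle I expect is not any single estimate but the coordination of three distinct appeals to GRH---the effective Brauer--Siegel lower bound $h\gg\sqrt\Delta/\log\log\Delta$ (used both for the archimedean integral and for controlling split-type principal prime sums), the Mertens-type estimate $|L'(1,\chi)/L(1,\chi)|\ll\log\log\Delta$ for the twisted inert sum, and the genus-theoretic control of $h_2/h$. The arithmetic input that inert primes of $K_0$ are automatically principal in $K$---which is precisely the mechanism generating the symplectic constant $-\tfrac{1}{2}\phi(0)$---depends essentially on $K_0$ having class number one, confirming the Katz--Sarnak prediction that the symmetry type is insensitive to the specific arithmetic of $K_0$.
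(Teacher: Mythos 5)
Your extraction of the main term is sound and is in substance the paper's own mechanism: detecting inert primes of $K_0$ via $\mathbf{1}_{\rm inert}=(1-\chi(\mathfrak{q}))/2$, using the prime ideal theorem for the \emph{fixed} field $K_0$ for the untwisted half, and bounding the $\chi$-twisted half by the GRH estimate $-\frac{L'}{L}(1+it,\chi)\ll\log\log$ of the conductor is exactly what the paper packages into the function $U(s)=(s-1)\zeta_{K_0}(s)/(L(s,\chi)\zeta_{\rm ram}(s))$; likewise your effective bound $h\gg\sqrt{\Delta}/\log\log\Delta$ is a legitimate (even stronger) replacement for the paper's Brauer--Siegel step. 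The gap is in your disposal of the degree-one, split-type \emph{principal} primes at $m=1$, i.e. the sum $\sum_{\mathfrak{p}\ {\rm principal},\ f_{K/\mathbb{Q}}(\mathfrak{p})=1}\frac{\log N\mathfrak{p}}{\sqrt{N\mathfrak{p}}}\widehat{\phi}\left(\frac{\log N\mathfrak{p}}{\log\Delta}\right)$. The bound $O(\Delta^{(\sigma-1)/2}\log\Delta\log\log\Delta)$ you claim from ``Chebotarev density $1/h$ plus the lower bound on $h$'' is only the density \emph{main} term; the equidistribution error is what matters, and it cannot be made small here. Writing the principality indicator as $\frac{1}{h}\sum_{\chi\in\widehat{\mathcal{CL}(K)}}\chi(\mathfrak{p})$, the trivial character indeed gives $\ll\Delta^{(\sigma-1)/2}$, but each nontrivial $\chi$ contributes, via the explicit formula, a quantity comparable to $\log\Delta\cdot\sum_{\gamma_\chi}\phi\left(\gamma_\chi\frac{\log\Delta}{2\pi}\right)$, and GRH provides no cancellation among the zeros of the $h-1$ distinct $L(s,\chi)$. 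After the $1/h$ average you are left with essentially $\log\Delta$ times the 1-level density $D_{\widehat{\mathcal{CL}(K)}}(\phi)$ itself -- the argument is circular, and the resulting error is of the same order as the $-\frac{1}{2}\phi(0)\log\Delta$ term you are isolating, not $O(\log\log\Delta)$. Effective Chebotarev in the Hilbert class field is even worse, since that field has discriminant roughly $\Delta^{h}$.

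What is missing is the paper's key arithmetic input, Lemma \ref{lem: key}: because $K=K_0(\sqrt{\beta})$ with $\beta$ totally negative, the relative norm of a generator $x+y\sqrt{\beta}$ is $\prod_\sigma(\sigma(x)^2+|\sigma(\beta)|\sigma(y)^2)\geq |N^{K_0}_{\mathbb{Q}}(\beta)|\gg_{K_0}\Delta$, so \emph{every} principal prime of $K$ of residue degree one over $\mathbb{Q}$ has norm at least $C\Delta$ with $C$ depending only on $K_0$. With $\supp(\widehat{\phi})\subset(-\sigma,\sigma)$, $\sigma<1$, those sums are therefore literally empty for large $\Delta$; no equidistribution input is needed, and the same norm bound kills the $m=2$ sums over primes whose square is generated by an element of $\mathcal{O}_{K}\setminus\mathcal{O}_{K_0}$, leaving only ramified primes, which give the $O(\log\log\Delta)$ via $\sum_{p\mid\Delta}\frac{\log p}{p}$. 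Your genus-theory treatment of the $m=2$, $2$-torsion piece has the same omission (only the main term is addressed), though there it is repairable because the weight is $1/N\mathfrak{p}$ and the edge bound $-\frac{L'}{L}(1+it,\chi)\ll\log\log$ applies character by character; for the $m=1$ sums with weight $1/\sqrt{N\mathfrak{p}}$ no such rescue exists, and without Lemma \ref{lem: key} (or an equivalent CM-structural norm lower bound) the proof does not close.
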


Frequently in computing 1-level densities of families, we are able to improve our support or isolate lower order terms if we restrict to a sub-family of the original family which is more amenable to averaging. See for instance the results of Gao \cite{Gao} and Miller \cite{Mil4} for sub-families of the family of quadratic Dirichlet characters with even fundamental discriminants at most $X$,\footnote{The sub-family studied is $\{8d : 0 < d \le X$; $d$ an odd, positive square-free
fundamental discriminant$\}$; this extra restriction facilitates the application of Poisson summation.} or \cite{Mil3} for families of elliptic curves. The situation is similar here; to derive the lower order terms of the 1-level density, we make the additional assumption that the class number of $K_0$ in the narrow sense is 1. Recall that the {\em narrow class group} of $K_0$ is defined similarly to the ordinary ideal class group, except that ideals are considered equivalent if and only if they differ by a totally positive element of $K_0$ rather than an arbitrary one. 

By restricting the family of number fields we study a little bit, we are able to isolate the first lower order term, which depends on the arithmetic of the field.


\begin{thm}[First Lower Order Term]\label{thm:lowterm} Assume the Generalized Riemann Hypothesis for all Hecke $L$-functions. Let $\phi$ be an even Schwartz function whose Fourier transform is supported in $(-1,1)$. Fix a normal, totally real number field $K_{0}/\mathbb{Q}$ whose class number in the narrow sense is 1, and let $\{K_{\Delta}\}$ be the family of all CM-fields of odd class number (in the usual sense) for which $K_{\Delta}^+ = K_{0}$, ordered by the absolute value of the discriminant $\Delta$. For a number field $E/\mathbb{Q}$, let $\rho_E$ be the residue of its Dedekind zeta function at the simple pole $s=1$
\be
\rho_{E}\ =\ {\rm res}_{s=1}\zeta_{E}(s)\ =\ \frac{2^{r_1}(2\pi)^{r_2}h_{E}R_{E}}{w_{E}\sqrt{|D_{E/\mathbb{Q}}|}},
\ee
and let $\gamma_E$ denote its Euler constant
\be
\gamma_E\ =\ \frac{d}{ds}[(s-1)\zeta_{E}(s)]_{s=1}\ =\ \lim_{s\to1}\left(\zeta_{E}(s) - \frac{\rho_E}{s-1}\right).
\ee Let $\gamma_{{\rm EM}}$ be the Euler-Mascheroni constant. Then the 1-level density is given by
\be
D_{\widehat{\mathcal{CL}(K)}}(\phi)\ =\ \widehat{\phi}(0)  - \frac{1}{2}\phi(0) + \frac{1}{\log\Delta}\left(\widehat{\phi}(0)\tau(\Delta) + \mathcal{L}_1(\Delta)\right) + O\left(\frac{1}{\log^{2}\Delta}\right)
\ee
where
\begin{eqnarray}
\mathcal{L}_1(\Delta) & = &
\frac{4}{h_{\Delta}}\int_{0}^{\infty}{\widehat{\phi}\left(\frac{x}{\log\Delta}\right)\cosh\left(\frac{x}{2}\right)dx}
+ \widehat{\phi}(0)\cdot\left(- 2N\gamma_{{\rm EM}} -2N\log 8\pi\right) \nonumber\\
& &\ + 2N\int_{0}^{\infty}{\frac{\widehat{\phi}(0) - \widehat{\phi}(x)}{2\sinh(x/2)}\ dx}
\end{eqnarray}
and
\be
\tau(\Delta)\ =\ 4\frac{\gamma_{K_0}}{\rho_{K_0}} - 2\frac{\gamma_K}{\rho_K} -4\sum_{\substack{\mathfrak{q}\subset\mathcal{O}_{K_0} \\ {\rm inert \ in} \ K}}\frac{\log N\mathfrak{q}}{N\mathfrak{q}^{2}-1}.
\ee
Moreover, $\tau(\Delta) = O(1)$, with the implied constant depending on $K_0$.

\end{thm}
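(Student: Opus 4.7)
The plan starts from Theorem~\ref{thm:oneleveldensity}: every term in that expansion apart from $S_1(\Delta,\phi) + S_2(\Delta,\phi)$ is already packaged into $\mathcal{L}_1(\Delta)$, so the whole task is to show
\[
S_1(\Delta,\phi) + S_2(\Delta,\phi) \;=\; -\tfrac12\phi(0)\log\Delta \;+\; \widehat{\phi}(0)\,\tau(\Delta) \;+\; O(1/\log\Delta).
\]

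The key structural step is to use both hypotheses to classify which primes of $K$ can appear in $S_1 + S_2$. Since $K_0$ has trivial narrow class group, every prime $\mathfrak{q}$ of $K_0$ is principal, so an inert $\mathfrak{q}$ lifts to a principal degree-$2$ prime $\mathfrak{q}\mathcal{O}_K$ of norm $N\mathfrak{q}^2$, and a ramified $\mathfrak{q}$ lifts to a degree-$1$ prime $\mathfrak{P}$ with $\mathfrak{P}^2$ principal. The odd-$h_K$ hypothesis then ensures $\mathcal{CL}(K)$ has no $2$-torsion, forcing ramified $\mathfrak{P}$ to be principal and, crucially, forcing any non-principal $\mathfrak{P}$ in $S_1$ to have class of odd order $\geq 3$; the inner sum in $S_1$ therefore starts at $m = 3$ and converges absolutely to a bounded constant multiple of $\widehat{\phi}(0)$. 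For split $\mathfrak{q} = \mathfrak{P}\overline{\mathfrak{P}}$, only those with $[\mathfrak{P}] = 1$ contribute to $S_2$, and Brauer--Siegel's lower bound $h_K \gg_\varepsilon \Delta^{1/2-\varepsilon}$ confines this contribution to $O(\Delta^{-1/2+\varepsilon})$, which is absorbed into the error.

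The main term $-\tfrac12\phi(0)\log\Delta$ then arises solely from the $m=1$ contribution of inert primes,
\[
S_2^{\mathrm{inert},1} \;=\; -4\sum_{\mathfrak{q}\ \mathrm{inert}}\frac{\log N\mathfrak{q}}{N\mathfrak{q}}\,\widehat{\phi}\!\left(\frac{2\log N\mathfrak{q}}{\log\Delta}\right),
\]
evaluated via Abel summation using the Mertens-type asymptotic $\sum_{N\mathfrak{q}\le X,\,\mathrm{inert}}\log N\mathfrak{q}/N\mathfrak{q} = \tfrac12\log X + C_{\mathrm{inert}} + o(1)$, itself a consequence of the PIT in $K_0$ together with Chebotarev for the quadratic character $\chi_{K/K_0}$. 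The leading piece gives $-\tfrac12\phi(0)\log\Delta$, while the constant $C_{\mathrm{inert}}$ produces (after the overall factor $-4\widehat{\phi}(0)$) part of $\tau(\Delta)$. To identify $C_{\mathrm{inert}}$ explicitly I would factor $\zeta_K(s) = \zeta_{K_0}(s)\,L(s,\chi_{K/K_0})$ and use the Laurent expansion $-\zeta_{K_0}'/\zeta_{K_0}(s) = (s-1)^{-1} - \gamma_{K_0}/\rho_{K_0} + O(s-1)$ and its analogue for $\zeta_K$, yielding the identity $-L'/L(1,\chi_{K/K_0}) = \gamma_{K_0}/\rho_{K_0} - \gamma_K/\rho_K$. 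Combining $-4C_{\mathrm{inert}}\widehat{\phi}(0)$ with the $m\geq 2$ inert contribution and reorganizing using $1/(N\mathfrak{q}^2 - 1) = \sum_{k\geq 1}1/N\mathfrak{q}^{2k}$ collapses the various Mertens-constant and prime-power pieces into the compact form of $\tau(\Delta)$.

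The bound $\tau(\Delta) = O(1)$ follows because $\gamma_{K_0}/\rho_{K_0}$ is a constant of the fixed base field, $\gamma_K/\rho_K$ is uniformly bounded across the family using the factorization and standard control of $L(s,\chi_{K/K_0})$ and $L'(s,\chi_{K/K_0})$ at $s=1$, and the inert sum is bounded by the convergent series $\sum_{\mathfrak{q}\subset\mathcal{O}_{K_0}}\log N\mathfrak{q}/(N\mathfrak{q}^2-1)$. The hard part is the bookkeeping: the Mertens-type asymptotics produce several prime-power correction sums, including $\Delta$-dependent pieces from the ramified primes whose contribution must be bounded via divisor-style estimates such as $\sum_{\mathfrak{q} | \mathfrak{d}_{K/K_0}}\log N\mathfrak{q}/N\mathfrak{q} \ll \log\log\Delta$, and all residuals must be shown to cancel cleanly into exactly $\tau(\Delta)$ without leftovers larger than $O(1/\log\Delta)$.
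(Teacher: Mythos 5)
Your overall organization (everything reduces to identifying $S_1+S_2$ up to $O(1/\log\Delta)$, the inert primes of $K_0$ carry the main term $-\tfrac12\phi(0)\log\Delta$, and the constant is assembled from $\zeta_K=\zeta_{K_0}L(s,\chi)$ via $-\tfrac{L'}{L}(1,\chi)=\gamma_{K_0}/\rho_{K_0}-\gamma_K/\rho_K$) matches the shape of the paper's argument, and your Mertens/Abel-summation treatment of the $m=1$ inert sum is a viable alternative to the paper's auxiliary function $U(s)$ and its power-series expansion. However, there is a genuine gap at the heart of the plan: you never use the CM norm bound of Lemma \ref{lem: key}, and without it the error terms you settle for are too large at exactly the order being computed. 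An unidentified $O(1)$ inside the bracket of Theorem \ref{thm:oneleveldensity} is the same size as $\widehat{\phi}(0)\tau(\Delta)+\mathcal{L}_1(\Delta)$, so bounding $S_1$ by ``a bounded constant multiple of $\widehat{\phi}(0)$'' does not prove the stated formula for $\tau(\Delta)$; the paper shows $S_1$ is \emph{identically zero} for large $\Delta$, because a non-principal $\mathfrak{p}$ with $\mathfrak{p}^{d}=(\alpha)$ and $d$ odd forces $\alpha\notin\mathcal{O}_{K_0}$, whence $N\mathfrak{p}\ge (C\Delta)^{1/d}$ and every argument $d k\log N\mathfrak{p}/\log\Delta$ falls outside $\supp(\widehat{\phi})\subset(-1,1)$. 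The same issue afflicts your split primes: Brauer--Siegel gives $h_K\gg\Delta^{1/2-\varepsilon}$, but that says nothing pointwise about which split primes are principal, and the equidistribution you would need (uniform in $\Delta$, over $N\mathfrak{p}\le\Delta^{\sigma}$, with errors small after weighting by $N\mathfrak{p}^{-1/2}$) is not available even under GRH, since the GRH error terms in sums over class-group characters do not shrink with $h_K$; the correct mechanism is again Lemma \ref{lem: key}, which makes these sums empty. Likewise your ramified-prime bound $\sum_{\mathfrak{q}\mid\mathfrak{d}}\log N\mathfrak{q}/N\mathfrak{q}\ll\log\log\Delta$ would leave an error $O(\log\log\Delta/\log\Delta)$ in the density, larger than the term being isolated; the paper instead invokes Horie's theorem (Proposition \ref{prop:oddclassnumber}: odd $h_K$ and narrow class number one of $K_0$ imply at most one ramified prime, of norm $\Delta/D_{K_0/\mathbb{Q}}^{2}$), so that contribution is $O(\log\Delta/\sqrt{\Delta})$.

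A second gap is the claim that $\tau(\Delta)=O(1)$ follows from ``standard control of $L(s,\chi)$ and $L'(s,\chi)$ at $s=1$.'' Under GRH the standard bound is $|L'/L(1,\chi)|\ll\log\log(\mathrm{cond})$, which gives only $\gamma_K/\rho_K=O(\log\log\Delta)$, not $O(1)$. The paper needs a genuinely different input here: the Murty--Van Order bound on number-field Euler constants (Proposition \ref{prop:eulerbound}) combined with the appendix argument (Okazaki's $Q_K=1$, the integral basis $\mathcal{O}_K=\mathcal{O}_{K_0}[(1+\sqrt{\beta})/2]$, and $|N_{\mathbb{Q}}^{K_0}(\beta)|\asymp\Delta$ with $\beta$ totally negative) to make the unit and basis data, hence the bound, independent of $\Delta$. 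Finally, be aware that your bookkeeping splits the inert inner sum as $m=1$ versus $m\ge2$, whereas the stated $\tau(\Delta)$ groups odd versus even $m$ (producing $\sum\log N\mathfrak{q}/(N\mathfrak{q}^{2}-1)$); the two reorganizations can be reconciled, but only after the Mertens constant is computed with GRH-uniform errors, and that identification is precisely the part your outline leaves undone.
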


\begin{rek} As is common in many families of $L$-functions (see for example \cite{FI,HKS,Mil2,Mil3,Mil4,Mil5,MilMo,Ya,Yo1}), the main term in the 1-level density is independent of the arithmetic of the family, which only surfaces in the lower order terms.


\end{rek}

\indent This paper is organized as follows. After analyzing part of the first lower order term, we prove a lemma on CM-fields that allows us to bound sums over principal primes of degree 1. We proceed to reduce sums over $K$ to sums over $K_0$, which are then handled using standard algebraic number theory. To deal with sums over degree 2 primes, we introduce a variant of the Dedekind zeta function of $K_0$ and show that integration against its logarithmic derivative yields the desired quantities (up to reasonably small error), from which we obtain the result. In Section \ref{sec:3}, we restrict our class of number fields in order to obtain complete control of the ramification behavior, which allows us to reduce the error terms significantly. We then extract the first lower order term by closely studying the arithmetic of the families in question, in the process proving a discriminant-independent bound on number field Euler constants that we haven't seen elsewhere in the literature (see Proposition \ref{prop:eulerbound} and Appendix \ref{sec:appendix}).

\section{Proof of Theorem \ref{thm:main}}

The proof of Theorem \ref{thm:main} follows from an analysis of the terms in Theorem \ref{thm:oneleveldensity} (the one-level density expansion from averaging the explicit formula over the family). The terms other than $S_i(\Delta,\phi)$ are readily analyzed. To see this, we first need a lemma relating the size of $h_\Delta$ to $\Delta$.

\begin{lem}\label{lem:class number}
We have $\log h_{\Delta} \sim \frac{1}{2}\log\Delta$ as $\Delta \rightarrow \infty$.
\end{lem}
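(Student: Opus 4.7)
The plan is to combine the analytic class number formula for $K_\Delta$ with two facts about CM-fields: a uniform bound on the regulator $R_{K_\Delta}$, and control of $\rho_{K_\Delta}$ via the factorization of $\zeta_{K_\Delta}$ into $\zeta_{K_0}$ and an $L$-function of a Hecke character.

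Since $K_\Delta$ has signature $(0,N)$, the analytic class number formula (in the form recalled in Theorem \ref{thm:lowterm}) reads
$$h_\Delta\,R_{K_\Delta}\ =\ \frac{w_{K_\Delta}\sqrt{\Delta}}{(2\pi)^N}\,\rho_{K_\Delta}.$$
I would first dispose of every factor on the right apart from $\sqrt{\Delta}$. The torsion $w_{K_\Delta}=|\mu_{K_\Delta}|$ is $O_N(1)$, since a primitive $m$-th root of unity in $K_\Delta$ forces $\varphi(m)\mid 2N$. For the regulator, I would invoke the classical description of units in a CM-field: the Hasse unit index $Q_\Delta:=[\mathcal{O}_{K_\Delta}^{\,*}:\mu_{K_\Delta}\mathcal{O}_{K_0}^{\,*}]$ lies in $\{1,2\}$. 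Since every complex embedding of $K_\Delta$ restricts to a real embedding of $K_0$, and since $\mathcal{O}_{K_0}^{\,*}$ has the same free rank $N-1$ as $\mathcal{O}_{K_\Delta}^{\,*}$, comparing the standard regulator determinants gives $R_{K_\Delta}=2^{N-1}R_{K_0}/Q_\Delta$, hence $R_{K_\Delta}=O_{K_0}(1)$.

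Next, I would factor $\zeta_{K_\Delta}(s)=\zeta_{K_0}(s)\,L(s,\psi)$, where $\psi$ is the quadratic Hecke character of $K_0$ cutting out the extension $K_\Delta/K_0$. Comparing residues at $s=1$ gives $\rho_{K_\Delta}=\rho_{K_0}\,L(1,\psi)$, and $\rho_{K_0}>0$ depends only on the fixed base $K_0$. Substituting back into the class number formula and taking logarithms,
$$\log h_\Delta\ =\ \tfrac12\log\Delta\ +\ \log L(1,\psi)\ +\ O_{K_0}(1),$$
so the lemma reduces to the estimate $\log L(1,\psi)=o(\log\Delta)$.

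The upper bound $L(1,\psi)\ll(\log\Delta)^{O(1)}$ is a routine partial summation on the Dirichlet series of $L(s,\psi)$, using the standard prime ideal count for $K_0$. The lower bound is the main obstacle: it amounts to ruling out a Siegel-type exceptional zero of $L(s,\psi)$. Under the GRH hypothesis that is in force throughout, a classical argument of Littlewood yields $L(1,\psi)\gg(\log\log\Delta)^{-O(1)}$, which is more than enough. (Alternatively, one could cite the Brauer--Siegel theorem for the sequence $\{K_\Delta\}$ of bounded degree $2N$, valid under GRH, to obtain $\log(h_\Delta R_{K_\Delta})\sim\tfrac12\log\Delta$ in one stroke, then absorb $\log R_{K_\Delta}=O(1)$.) Either route yields $\log h_\Delta\sim\tfrac12\log\Delta$.
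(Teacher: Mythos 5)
Your argument is correct, and your primary route is genuinely different from (and more explicit than) the paper's. The paper disposes of the lemma in two lines: it cites the Brauer--Siegel theorem (applicable because all the $K_\Delta$ have the same degree $2N$, with GRH in force to cover the non-normal case) to get $\log(h_\Delta R_\Delta)\sim\tfrac12\log\Delta$, and then quotes the same unit-index fact you use, $R_\Delta/R_{K_0}=2^{N-1}/Q$ with $Q\in\{1,2\}$, to conclude $R_\Delta=O_{K_0}(1)$. Your main route instead unpacks what Brauer--Siegel is doing in this special CM situation: the analytic class number formula, the bounded torsion $w_{K_\Delta}=O_N(1)$, the identical regulator relation, and the factorization $\rho_{K_\Delta}=\rho_{K_0}L(1,\psi)$ reduce everything to bounding $\log L(1,\psi)$ for a single quadratic Hecke character of the fixed field $K_0$, which GRH handles via Littlewood's argument. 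What this buys you is a quantitative refinement --- effectively $\log h_\Delta=\tfrac12\log\Delta+O(\log\log\Delta)$ rather than a bare asymptotic --- and it makes visible exactly where a Siegel-type exceptional zero would be the obstruction; the cost is an explicit appeal to GRH, which is harmless here since the theorems of the paper assume it for all Hecke $L$-functions anyway. Your parenthetical fallback (Brauer--Siegel for the bounded-degree family plus $\log R_{K_\Delta}=O(1)$) is precisely the paper's proof, so you have both arguments in hand; the only caveat worth recording if you keep the direct route is that the conductor of $\psi$ satisfies $N^{K_0}_{\mathbb{Q}}\mathfrak{f}(\psi)\asymp\Delta$ by the conductor--discriminant formula, which is what licenses writing your bounds on $L(1,\psi)$ in terms of $\Delta$.
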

\begin{proof}
Since the fields $K_\Delta$ all have the same degree over $\mathbb{Q}$, we have by the Brauer-Siegel Theorem (\cite{La1}, Chapter XVI) that
\be
\log(h_{\Delta}R_{\Delta})\ \sim\ \frac{1}{2}\log\Delta \ \ \text{as}\ \Delta \rightarrow \infty.
\ee
The regulator $R_{\Delta}$ satisfies (\cite{Wa}, pg. 41)
\be
\frac{R_{\Delta}}{R_{K^+}}\ =\ \frac{1}{Q}2^{N-1}
\ee
where $Q = 1 \ \text{or} \ 2$, and therefore $R_{\Delta}$ is bounded by a constant independent of $\Delta$. This proves the claim.
\end{proof}

\begin{lem}\label{lem:htrigfns} Assume $\supp(\widehat{\phi}) \subset (-\sigma, \sigma)$ with $\sigma < 1$. Then the terms involving $\cosh$ and $\sinh$ in Theorem \ref{thm:oneleveldensity} are $O(1/\log \Delta)$. \end{lem}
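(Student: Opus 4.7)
The $\sinh$ and $\cosh$ terms enter as $\frac{1}{\log\Delta}$ times bracketed quantities, so it suffices to prove each bracketed quantity is $O(1)$. My plan is to handle the $\sinh$ integral by elementary estimates on its integrand, and to handle the $\cosh$ integral by exploiting the compact support of $\widehat\phi$ together with the Brauer--Siegel lower bound on $h_\Delta$ supplied by Lemma \ref{lem:class number}.

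For the $\sinh$ term, I would show
\[
\int_{0}^{\infty}\frac{\widehat\phi(0)-\widehat\phi(x)}{2\sinh(x/2)}\,dx \ =\ O_{\phi}(1),
\]
by splitting the integral at $x=1$. On $[0,1]$, the numerator is $O(x)$ by the mean value theorem applied to the Schwartz function $\widehat\phi$, while $\sinh(x/2)\gg x$, so the integrand is bounded. On $[1,\infty)$, the numerator is bounded by $2\|\widehat\phi\|_\infty$ while $1/\sinh(x/2)$ decays exponentially, so the integral is finite. Hence this term contributes $O(1/\log\Delta)$.

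For the $\cosh$ term, the key point is that $\supp(\widehat\phi)\subset(-\sigma,\sigma)$ forces $\widehat\phi(x/\log\Delta)$ to vanish for $x\geq\sigma\log\Delta$. So
\[
\Big|\int_0^\infty\widehat\phi\!\left(\tfrac{x}{\log\Delta}\right)\cosh(x/2)\,dx\Big| \ \leq\ \|\widehat\phi\|_\infty \int_0^{\sigma\log\Delta}\cosh(x/2)\,dx \ \ll\ e^{\sigma\log\Delta/2}\ =\ \Delta^{\sigma/2}.
\]
By Lemma \ref{lem:class number}, for every $\epsilon>0$ we have $h_\Delta\gg_\epsilon \Delta^{1/2-\epsilon}$ for $\Delta$ sufficiently large. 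Choosing $\epsilon>0$ with $\sigma/2 < (1-\epsilon)/2$, which is possible because $\sigma<1$, gives
\[
\frac{4}{h_\Delta}\int_0^\infty\widehat\phi\!\left(\tfrac{x}{\log\Delta}\right)\cosh(x/2)\,dx \ \ll\ \Delta^{(\sigma-1)/2+\epsilon} \ =\ o(1).
\]
In particular this bracketed quantity is $O(1)$, so the $\cosh$ contribution to $D_{\widehat{\mathcal{CL}(K)}}(\phi)$ is $O(1/\log\Delta)$, completing the proof.

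The main (and really only) obstacle is the exponential growth of $\cosh(x/2)$: if $\supp(\widehat\phi)$ were allowed to extend to $\sigma\geq 1$ we would lose the required cancellation against $h_\Delta\sim\Delta^{1/2}$. The hypothesis $\sigma<1$ is precisely what permits the Brauer--Siegel bound to absorb the exponential. This is the same threshold that appears throughout the subsequent analysis of $S_1$ and $S_2$.
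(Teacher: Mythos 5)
Your proof is correct and follows essentially the same route as the paper: elementary bounds (boundedness near $x=0$ plus exponential decay) for the $\sinh$ integral, and for the $\cosh$ term the compact support of $\widehat{\phi}$ giving a bound of order $\Delta^{\sigma/2}$, which is beaten by the Brauer--Siegel bound $h_\Delta = \Delta^{1/2+o(1)}$ from Lemma \ref{lem:class number} precisely because $\sigma<1$. The only differences are cosmetic (the paper changes variables $u=x/\log\Delta$ before bounding, and phrases the small-$x$ control of the $\sinh$ integrand via L'Hopital rather than the mean value theorem).
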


\begin{proof} The last two terms, where the hyperbolic trig functions are in the denominator, are readily analyzed. As $\cosh(x/2) \gg 1$ and decays exponentially, the integrand with $\cosh$ in the denominator is $O(1)$. The $\sinh$ integral is handled similarly (note everything is well-behaved near $x=0$ because $\phi$ is differentiable, and by L'Hopital's rule the quotient is bounded near $x=0$).

We are left with handling the integral of $\hphi$ against $\cosh$. Changing variables ($u = x/\log \Delta$) gives \be  \frac{4}{h_{\Delta}\log \Delta}\int_{0}^{\infty}\widehat{\phi}\left(\frac{x}{\log\Delta}\right)\cosh\left(\frac{x}{2}\right)dx \ = \ \frac{4}{h_\Delta} \int_0^\infty \hphi(u) \cosh\left(\frac{u \log \Delta}{2}\right) du. \ee Using $2\cosh(t) = e^t + e^{-t}$, we see this integral is dominated by \be \frac1{h_\Delta} \int_0^\infty \left|\hphi(u)\right| \Delta^{u/2} du \ \ll \ \frac{\sigma \Delta^{\sigma/2}}{h_\Delta}, \ee which tends to zero by Lemma \ref{lem:class number} as $\sigma < 1$. \end{proof}

Thus, by the above lemma, the asymptotic behavior of $\mathcal{F}(\Delta, \phi)$ for fixed $\phi$ is determined by that of $S_{1}$ and $S_{2}$. While the hyperbolic integrals will contribute lower order terms of size $1/\log \Delta$, the values of these integrals are independent of the family.

\ \\
\texttt{\emph{In what follows, we drop $\Delta$ from our number field notation; thus $K= K_{\Delta}$, $h=h_{\Delta}$, et cetera.}}\ \\

Before analyzing $S_1$ and $S_2$, we first prove some lemmas on CM-fields which will be essential in our investigations.

\subsection{Lemmas on CM-fields}\label{sec: CM-fields}

Just as in the case of quadratic fields over $\mathbb{Q}$, one easily proves the following.

\begin{lem}
Let $K$ be a CM-field and $\beta \in \mathcal{O}_{K^+}$ a totally negative, square-free element such that $K = K^{+}(\sqrt{\beta})$. Then either
\begin{eqnarray*}
\mathcal{O}_{K}\ =\ \mathcal{O}_{K^+}[\sqrt{\beta}] & \ \ \text{or} \ \ & \mathcal{O}_{K}\ =\ \mathcal{O}_{K^+}\left[\frac{1+\sqrt{\beta}}{2}\right].
\end{eqnarray*}
\end{lem}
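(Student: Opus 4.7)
The plan is to adapt the classical proof that $\mathcal{O}_{\mathbb{Q}(\sqrt d)}$ equals either $\mathbb{Z}[\sqrt d]$ or $\mathbb{Z}[(1+\sqrt d)/2]$ (depending on $d \bmod 4$) to the relative quadratic extension $K/K^+$. Since $\sqrt{\beta}$ satisfies the monic polynomial $x^2 - \beta \in \mathcal{O}_{K^+}[x]$, the inclusion $\mathcal{O}_{K^+}[\sqrt{\beta}] \subseteq \mathcal{O}_K$ is automatic. The content is therefore to sandwich $\mathcal{O}_K$ inside the $\mathcal{O}_{K^+}$-module $\tfrac{1}{2}\mathcal{O}_{K^+} + \tfrac{1}{2}\mathcal{O}_{K^+}\sqrt{\beta}$ and then decide, via a congruence modulo $4\mathcal{O}_{K^+}$, whether the halving is actually attained.

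For the first step I would take any $\alpha \in \mathcal{O}_K$ and write $\alpha = a + b\sqrt{\beta}$ with $a,b \in K^+$. Let $\sigma$ be the nontrivial element of $\mathrm{Gal}(K/K^+)$ (this is just complex conjugation on $K$); it preserves $\mathcal{O}_K$, so the relative trace and norm give
\[
2a \ =\ \alpha + \sigma(\alpha) \ \in\ \mathcal{O}_K \cap K^+ \ =\ \mathcal{O}_{K^+}, \qquad a^2 - b^2\beta \ =\ \alpha\sigma(\alpha) \ \in\ \mathcal{O}_{K^+}.
\]
Setting $A = 2a$, the identity $(2b)^2\beta = A^2 - 4(a^2 - b^2\beta)$ shows $(2b)^2\beta \in \mathcal{O}_{K^+}$. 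Square-freeness of $\beta$ (so $v_{\mathfrak{p}}(\beta) \in \{0,1\}$ for every prime $\mathfrak{p}$ of $\mathcal{O}_{K^+}$) combined with $2v_{\mathfrak{p}}(2b) + v_{\mathfrak{p}}(\beta) \geq 0$ and the integrality of valuations forces $B := 2b \in \mathcal{O}_{K^+}$. Thus every $\alpha \in \mathcal{O}_K$ has the form $(A + B\sqrt{\beta})/2$ with $A, B \in \mathcal{O}_{K^+}$ satisfying the congruence $A^2 \equiv B^2\beta \pmod{4\mathcal{O}_{K^+}}$.

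The conclusion then splits into two cases. If no $\alpha \in \mathcal{O}_K$ has $B \notin 2\mathcal{O}_{K^+}$, then every $B$ is even and the congruence forces $A$ even as well (via a local analysis at primes of $\mathcal{O}_{K^+}$ dividing $2$, again using squarefreeness of $\beta$), so $a, b \in \mathcal{O}_{K^+}$ and $\mathcal{O}_K = \mathcal{O}_{K^+}[\sqrt{\beta}]$. Otherwise, pick some $\alpha_0 = (A_0 + B_0\sqrt{\beta})/2 \in \mathcal{O}_K$ with $B_0 \notin 2\mathcal{O}_{K^+}$; the congruence then forces $\beta$ to be a square modulo $4\mathcal{O}_{K^+}$, and after modifying $\alpha_0$ by an element of $\mathcal{O}_{K^+}[\sqrt{\beta}]$ and multiplying by a suitable unit of $\mathcal{O}_{K^+}$, one extracts the integral element $(1+\sqrt{\beta})/2$, which together with $\mathcal{O}_{K^+}$ generates all of $\mathcal{O}_K$.

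The main obstacle is this last step. Over $\mathbb{Z}$ the dichotomy is transparent because $\{0,1\}$ are the only squares in $\mathbb{Z}/4\mathbb{Z}$, which immediately classifies the cases by $d \bmod 4$. Over a general totally real $K^+$ the ring $\mathcal{O}_{K^+}/4\mathcal{O}_{K^+}$ is more intricate near the primes above $2$, and one must either localize prime by prime or pass to the completions $(\mathcal{O}_{K^+})_{\mathfrak{p}}$ at each $\mathfrak{p} \mid 2$ and use the square-freeness of $\beta$ together with a unit-normalization argument to show that only the two stated possibilities arise. This is where the real content of the lemma sits; everything else is a direct transposition of the classical trace-and-norm argument to the relative setting.
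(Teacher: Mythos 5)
Your first two paragraphs take exactly the paper's route: the paper's proof consists of writing $\alpha = x + y\sqrt{\beta}$, noting its minimal polynomial $t^2 - 2xt + x^2 - \beta y^2$ over $K^+$, concluding that $\alpha$ is integral if and only if $2x$ and $x^2 - \beta y^2$ lie in $\mathcal{O}_{K^+}$, and then simply asserting that the two possibilities correspond to $x \in \mathcal{O}_{K^+}$ versus $x \in \frac{1}{2}\mathcal{O}_{K^+}$. You are in fact more careful than the paper on one point: the deduction $2y \in \mathcal{O}_{K^+}$ from square-freeness of $\beta$ (the valuation argument) is needed and the paper never records it. Your first case (all $B$ even forces all $A$ even, hence $\mathcal{O}_K = \mathcal{O}_{K^+}[\sqrt{\beta}]$) is also fine.

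The gap you flag in the second case is genuine, and it is not just a routine localization left to the reader. If some integral $\alpha_0 = (A_0 + B_0\sqrt{\beta})/2$ has $B_0 \notin 2\mathcal{O}_{K^+}$, the congruence $A_0^2 \equiv B_0^2\beta \pmod{4\mathcal{O}_{K^+}}$ only makes $\beta$ a square modulo the part of $4\mathcal{O}_{K^+}$ supported at those primes $\mathfrak{p} \mid 2$ at which $B_0$ is a unit; it says nothing at the other primes above $2$. When $2$ has more than one prime in $K^+$, one can arrange by CRT that $\beta \equiv 1$ modulo the square of one prime above $2$ while $\beta$ has valuation $1$ at another; then $\mathcal{O}_K$ contains elements $(A+B\sqrt{\beta})/2$ with $B \notin 2\mathcal{O}_{K^+}$ (so $\mathcal{O}_K \neq \mathcal{O}_{K^+}[\sqrt{\beta}]$) and yet $(1+\sqrt{\beta})/2$ is not integral, so neither of the two stated rings is attained. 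In other words, the honest dichotomy is a statement at each prime above $2$, and the global two-ring conclusion cannot be extracted from a single $\alpha_0$ by a unit normalization; it needs extra input forcing uniform behaviour at all primes over $2$. You should be aware that the paper's own proof is the same sketch as your opening and never addresses this point either; what the paper actually uses later is weaker — in Lemma \ref{lem: key} only the containment $\mathcal{O}_K \subseteq \frac{1}{2}\mathcal{O}_{K^+} + \frac{1}{2}\mathcal{O}_{K^+}\sqrt{\beta}$ matters (and your trace--norm argument does prove that), while in Section \ref{sec:3} the additional hypothesis that at most one finite prime ramifies is what pins down the ring and the relative discriminant. So your instinct about where the real content sits is correct, but as written the final step of your proposal does not close, and no amount of massaging a single element will close it without using such extra hypotheses or recasting the lemma locally at the primes above $2$.
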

Indeed, the minimal polynomial of an element $\alpha = x + y\sqrt{\beta} \in K, x,y \in K^+$ over $K^+$ is
\[
t^2 - 2xt + x^{2} - \beta y^2
\]
so by transitivity of integral closure, $\alpha \in \mathcal{O}_{K}$ if and only if $2x, x^{2} - \beta y^{2} \in \mathcal{O}_{K^+}$. The two possibilities of the lemma then correspond to whether $x \in \mathcal{O}_{K^+}$ or $x \in \displaystyle\frac{1}{2}\mathcal{O}_{K^+}$.

The following lemma is crucial, as it allows us to bound sums over degree 1 principal primes (by showing the sums are vacuous if the support is restricted as in Theorem \ref{thm:main}).

\begin{lem}\label{lem: key}
Let $K$ be a CM-field with maximal real subfield $K^+$. Choose $\beta \in K^+$ which is totally negative and such that $K = K^+(\sqrt{\beta})$. Let $\mathfrak{p} \subset \mathcal{O}_{K}$ be a principal prime ideal of degree 1 with norm $N\mathfrak{p} = p$. Then $p\geq C\Delta$, where $C$ is a constant depending only on $K^{+}$.
\end{lem}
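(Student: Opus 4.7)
The plan is to exploit the quadratic nature of $K/K^+$. Writing a generator of $\mathfrak{p} = (\alpha)$ as $\alpha = x + y\sqrt{\beta}$ with $2x, 2y \in \mathcal{O}_{K^+}$ (as allowed by the preceding lemma), the tower of norms gives
\[
p\ =\ N_{K/\mathbb{Q}}(\alpha)\ =\ N_{K^+/\mathbb{Q}}(x^2 - \beta y^2)\ =\ \prod_{\sigma: K^+ \hookrightarrow \mathbb{R}}\bigl(\sigma(x)^2 + |\sigma(\beta)|\,\sigma(y)^2\bigr),
\]
where the last equality uses that $\beta$ is totally negative so every factor is a positive real. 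Discarding the $\sigma(x)^2$ summands already produces the clean inequality $p \geq |N_{K^+/\mathbb{Q}}(\beta)| \cdot N_{K^+/\mathbb{Q}}(y)^2$.

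First I would dispose of the case $y = 0$. If this held then $\alpha \in \mathcal{O}_{K^+}$, and the factorization of $(\alpha)\mathcal{O}_{K^+}$ in $\mathcal{O}_{K^+}$ would extend to that of $\mathfrak{p}$ in $\mathcal{O}_K$; the only way such an extension can be a single prime is if $(\alpha)\mathcal{O}_{K^+} = \mathfrak{q}$ is itself a prime inert in $K/K^+$, but then $N\mathfrak{p} = (N\mathfrak{q})^2$ would be a perfect square, contradicting the hypothesis $\deg \mathfrak{p} = 1$. With $y \neq 0$ secured, the integrality of $2y$ forces $|N_{K^+/\mathbb{Q}}(2y)| \geq 1$, hence $N_{K^+/\mathbb{Q}}(y)^2 \geq 4^{-N}$, yielding
\[
p\ \geq\ \frac{|N_{K^+/\mathbb{Q}}(\beta)|}{4^N}.
\]

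Finally I would convert this into a bound on $\Delta = \Delta_K$ via the relative discriminant tower
\[
\Delta_K\ =\ \Delta_{K^+}^2 \cdot |N_{K^+/\mathbb{Q}}(\mathfrak{d}_{K/K^+})|.
\]
Because the order $\mathcal{O}_{K^+}[\sqrt{\beta}] \subseteq \mathcal{O}_K$ has relative discriminant $(4\beta)$, we have $\mathfrak{d}_{K/K^+} \mid (4\beta)$, so $|N_{K^+/\mathbb{Q}}(\mathfrak{d}_{K/K^+})| \leq 4^N |N_{K^+/\mathbb{Q}}(\beta)|$, giving $|N_{K^+/\mathbb{Q}}(\beta)| \geq \Delta_K/(4^N \Delta_{K^+}^2)$. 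Substituting, $p \geq \Delta/(16^N \Delta_{K^+}^2) =: C\Delta$, with $C$ depending only on the fixed field $K^+$. The one nuisance is the case split of the preceding lemma on the shape of $\mathcal{O}_K$, which only affects whether $x, y$ lie in $\mathcal{O}_{K^+}$ or merely in $\tfrac{1}{2}\mathcal{O}_{K^+}$; this is cleanly absorbed into the $4^{-N}$ factor, so no genuine obstacle arises.
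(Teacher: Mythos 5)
Your proof is correct and follows essentially the same route as the paper: write a generator as $x+y\sqrt{\beta}$, use total negativity of $\beta$ to force $p\gg |N_{K^+/\mathbb{Q}}(\beta)|$ once $y\neq 0$ is ruled out, and then translate $|N_{K^+/\mathbb{Q}}(\beta)|$ into $\Delta$ via the relative discriminant formula. The only differences are cosmetic refinements (handling both shapes of $\mathcal{O}_K$ at once via $2x,2y\in\mathcal{O}_{K^+}$ at the cost of a factor $4^{-N}$, and using divisibility $\mathfrak{d}_{K/K^+}\mid(4\beta)$ rather than equality), which do not change the argument.
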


\begin{proof}
We assume that $\mathcal{O}_{K} = \mathcal{O}_{K^+}[\sqrt{\beta}]$; the other case is similar. We first claim that $p \geq |N^{K^+}_{\mathbb{Q}}(\beta)|$. Since $\mathfrak{p}$ is principal, there exist $x,y \in \mathcal{O}_{K^+}$ such that $\mathfrak{p} = (x + y\sqrt{\beta})$. Suppose $y = 0$; then
\begin{eqnarray*}
N\mathfrak{p} \ :=\ N^{K}_{\mathbb{Q}}(\mathfrak{p}) &\ =\ & N^{K^+}_{\mathbb{Q}}(N^{K}_{K^+}(\mathfrak{p})) \\
& = & N^{K^+}_{\mathbb{Q}}(x^2) \\
& = & N^{K^+}_{\mathbb{Q}}(x)^2
\end{eqnarray*}
which is a contradiction since $p = N\mathfrak{p}$ is a prime number ($|N^{K^+}_{\mathbb{Q}}(x)| > 1$ because $x$ can't be a unit). Thus $y \neq 0$.

Assume now $y \neq 0$. Recall the minimal polynomial of $x+y\sqrt{\beta}$ over $K^+$ is
\be
t^2 - 2xt + x^{2} - \beta y^2,
\ee
so $N^{K}_{K^+}(\mathfrak{p}) = N^{K}_{K^+}(x + y\sqrt{\beta}) = x^{2} - \beta y^2$. Hence, since the degree is multiplicative over towers,
\begin{eqnarray}
p &\ = \ & |N^{K^+}_{\mathbb{Q}}(x^2 - \beta y^2)| \nonumber\\
&=& \left|\prod_{\sigma:K^+ \to \mathbb{C}}{\sigma(x^2 - \beta y^2)}\right|\nonumber\\
& = &\left|\prod_{\sigma:K^+ \to \mathbb{C}}{(\sigma(x)^2 - \sigma(\beta)\sigma(y)^2)}\right|.
\end{eqnarray}
We now use our assumption that $\beta$ is totally negative, which implies that $\sigma(\beta) < 0$ for each $\sigma$. We have $-\sigma(\beta) = |\sigma(\beta)|$ and so
\begin{equation}
\left|\prod_{\sigma}{(\sigma(x)^2 - \sigma(\beta)\sigma(y)^2)}\right|\ =\ \left|\prod_{\sigma}{(\sigma(x)^2 + |\sigma(\beta)|\sigma(y)^2)}\right|.
\end{equation}
Since $x,y \in K^+$ and $K^+$ is totally real, we have $\sigma(x),\sigma(y) \in \mathbb{R}$ for each $\sigma$. Therefore $\sigma(x)^2 \geq 0,\sigma(y)^2 > 0$ and so
\begin{eqnarray}
\left|\prod_{\sigma}{(\sigma(x)^2 + |\sigma(\beta)|\sigma(y)^2)}\right| &\ = \ & \prod_{\sigma}{(\sigma(x)^2 + |\sigma(\beta)|\sigma(y)^2)} \nonumber\\
&\geq & \prod_{\sigma}{\sigma(x)^2} + \prod_{\sigma}{|\sigma(\beta)|\sigma(y)^2}\nonumber\\
&=& N^{K^+}_{\mathbb{Q}}(x)^2 + |N^{K^+}_{\mathbb{Q}}(\beta)| \cdot N^{K^+}_{\mathbb{Q}}(y)^2.
\end{eqnarray}
Since $y \neq 0$ and $y \in \mathcal{O}_{K^+}$, $N^{K^+}_{\mathbb{Q}}(y)^2$ is a positive integer. Thus the last expression is at least $|N^{K^+}_{\mathbb{Q}}(\beta)|$, which proves the claim.

By the relative discriminant formula, and since $[K : K^+] =2$, we have
\begin{equation}
D_{K/\mathbb{Q}}\ =\ N^{K^+}_{\mathbb{Q}}(D_{K/K^{+}}) \cdot D_{K^{+}/\mathbb{Q}}^2
\end{equation}
where for an extension of number fields $K/E$, $D_{K/E}$ denotes the relative discriminant (which we take to be an integer if $E = \mathbb{Q}$, although it is an ideal of $\mathcal{O}_{E}$ in general). Since $D_{K/K^{+}} = (4\beta)$, we have $N^{K^+}_{\mathbb{Q}}(D_{K/K^{+}}) = 4^{N}|N^{K^+}_{\mathbb{Q}}(\beta)|.$ Therefore, by the above claim, we have
\begin{equation}
p\ \geq\ |N^{K^+}_{\mathbb{Q}}(\beta)|\ =\ \frac{|D_{K/\mathbb{Q}}|}{4^{N}D_{K^{+}/\mathbb{Q}}^2}\ =\ \frac{\Delta}{4^{N}D_{K^{+}/\mathbb{Q}}^2}
\end{equation}
Finally, note that $1 / (4^{N}D_{K^{+}/\mathbb{Q}}^2)$ depends only on $K^{+}$.
\end{proof}
In particular, since in our setting $K^{+} = K_{0}$ is fixed, we see that $C$ is independent of $\Delta$. This observation will be crucial in what follows, in that it allows us to assert the vacuity of certain sums since they only involve primes whose norms lie outside the support of $\widehat{\phi}$.

\begin{rek}
The CM structure is crucial to obtain such a strong lower bound on the norm of degree 1 principal primes. In general, the results of Lagarias, Montgomery and Odlyzko \cite{LMO} and Oesterl\'{e} \cite{Oe} guarantee that for $L/K$ a Galois extension of number fields, there exists a prime $\mathfrak{p}$ of $K$ of norm at most $70(\log |D_{L/\mathbb{Q}}|)^2$. One must therefore avoid number fields with extensions of small discriminant in order to obtain such a bound.
\end{rek}

\subsection{Evaluation of $S_{1}$}

\begin{lem}\label{lem: S_1} Assume  $\supp(\hphi) \subset (-\sigma, \sigma)$.
If $\sigma < 1$, we have
\be
S_{1}(\Delta, \phi)\ =\ O(\log\log\Delta) \ \text{as} \ \Delta\rightarrow\infty.
\ee
\end{lem}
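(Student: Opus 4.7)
The plan is to bound
\[
|S_{1}(\Delta,\phi)|\ \leq\ 2\|\widehat{\phi}\|_{\infty}\sum_{\mathfrak{p}}\log N\mathfrak{p}\sum_{\substack{m\geq 2\\ \mathfrak{p}^{m}\text{ principal}}}N\mathfrak{p}^{-m/2},
\]
where the support hypothesis $\supp(\widehat{\phi})\subset(-\sigma,\sigma)$ with $\sigma<1$ forces $N\mathfrak{p}^{m}<\Delta^{\sigma}$ on all nonzero contributions. I would split the $\mathfrak{p}$-sum into three cases: (i) $\mathfrak{p}$ has residue degree $\geq 2$ over $\mathbb{Q}$; (ii) $\mathfrak{p}$ has degree $1$ and $\mathfrak{q}:=\mathfrak{p}\cap\mathcal{O}_{K_{0}}$ splits in $K$; (iii) $\mathfrak{p}$ has degree $1$ and $\mathfrak{q}$ ramifies in $K$. (Inert $\mathfrak{q}$ is impossible for degree-$1$ $\mathfrak{p}$, since residue degrees multiply and $[K:K_{0}]=2$.) Case (i) is elementary: $N\mathfrak{p}\geq p^{2}$, the inner geometric sum over $m\geq 2$ is $O(1/N\mathfrak{p})$, and summing $f\log p/p^{f}$ over the at most $2N/f$ primes of degree $f\geq 2$ above each $p$ gives an absolutely convergent total $O(1)$.

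The heart of the proof is case (ii), which uses the CM structure essentially. Writing $\mathfrak{p}^{m}=(\alpha)$ and expanding $\alpha=x+y\sqrt{\beta}\in\mathcal{O}_{K}$ as in Section \ref{sec: CM-fields}, consider whether $y=0$. If $y=0$ then $\alpha\in K_{0}$, so $(\alpha)\mathcal{O}_{K}$ is fixed by the nontrivial element of $\mathrm{Gal}(K/K_{0})$; but $\overline{\mathfrak{p}^{m}}=\bar{\mathfrak{p}}^{m}\neq\mathfrak{p}^{m}$ since $\bar{\mathfrak{p}}\neq\mathfrak{p}$ in the split case, contradicting unique factorization. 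Hence $y\neq 0$, and the same product-expansion inequality driving the proof of Lemma \ref{lem: key} yields
\[
p^{m}\ =\ |N^{K}_{\mathbb{Q}}(\alpha)|\ \geq\ |N^{K_{0}}_{\mathbb{Q}}(\beta)|\ \gg\ \Delta,
\]
with implied constant depending only on $K_{0}$. The support constraint $p^{m}<\Delta^{\sigma}<\Delta$ then excludes case (ii) altogether once $\Delta$ is sufficiently large.

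Case (iii) is where the $\log\log\Delta$ actually enters, and is the main obstacle. Here $\mathfrak{p}^{2}=\mathfrak{q}\mathcal{O}_{K}$ is automatically principal because $h_{K_{0}}=1$ supplies a generator from $\mathcal{O}_{K_{0}}$, so non-principal $\mathfrak{p}$ of this type have class-group order exactly $2$, the inner $m$-sum is $O(1/N\mathfrak{p})$, and the task reduces to bounding $\sum_{\mathfrak{q}\mid D_{K/K_{0}}}\log N\mathfrak{q}/N\mathfrak{q}$. The relative discriminant formula $N^{K_{0}}_{\mathbb{Q}}(D_{K/K_{0}})=\Delta/D_{K_{0}/\mathbb{Q}}^{2}\ll\Delta$ forces $\prod_{\mathfrak{q}\mid D_{K/K_{0}}}N\mathfrak{q}\ll\Delta$. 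Splitting the sum at norm $\log\Delta$, the primes $\mathfrak{q}$ with $N\mathfrak{q}\leq\log\Delta$ contribute at most $\sum_{N\mathfrak{q}\leq\log\Delta}\log N\mathfrak{q}/N\mathfrak{q}=\log\log\Delta+O(1)$ by the number-field Mertens estimate in $K_{0}$, while the primes with $N\mathfrak{q}>\log\Delta$ number at most $\log\Delta/\log\log\Delta$ (their product being $\ll\Delta$) and each contributes $\ll\log\log\Delta/\log\Delta$, for a total of $O(1)$. Adding the three cases yields $S_{1}(\Delta,\phi)=O(\log\log\Delta)$.
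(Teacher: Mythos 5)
Your proof is correct and follows essentially the same route as the paper: the same two pillars carry the argument, namely the norm inequality from the proof of Lemma \ref{lem: key} together with $\sigma<1$ to annihilate the non-ramified degree-one contribution, and a bound on $\sum \log N\mathfrak{q}/N\mathfrak{q}$ over ramified primes to produce the $O(\log\log\Delta)$, with the degree-$\geq 2$ primes and higher prime powers dismissed as $O(1)$ exactly as in the text. The only differences are organizational: you case on the splitting type of $\mathfrak{q}=\mathfrak{p}\cap\mathcal{O}_{K_0}$ (using Galois conjugation to rule out a generator lying in $\mathcal{O}_{K_0}$ in the split case), whereas the paper cases on whether the generator of $\mathfrak{p}^2$ lies in $\mathcal{O}_{K^+}$ and deduces ramification in that case --- the two dichotomies are equivalent --- and you bound the ramified contribution via the relative discriminant formula and a split at norm $\log\Delta$ rather than quoting the standard fact $\sum_{p\mid\Delta}\log p/p\ll\log\log\Delta$.
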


\begin{proof} First, we claim that
\be\label{eq:s1deltaphiprincnonprinc}
S_{1}(\Delta, \phi)\ =\ -2\sum_{\substack{\mathfrak{p}\ {\rm non-principal} \\ \mathfrak{p}^2\ {\rm principal}}}\frac{\log N\mathfrak{p}}{N\mathfrak{p}}\widehat{\phi}\left(2\frac{\log N\mathfrak{p}}{\log \Delta}\right) + O(1).
\ee
Indeed, since $\hat{\phi}$ is bounded, and since each rational prime $p$ has at most $2N$ prime ideals lying over it in $K$, the sum
\be
\sum_{\mathfrak{p}\ {\rm non-principal}}\log N\mathfrak{p}\sum_{m=3}^{\infty}\frac{\widehat{\phi}\left( m \frac{\log N\mathfrak{p}}{\log \Delta}\right)}{N\mathfrak{p}^{m/2}}
\ee
is bounded by a constant times a convergent series, namely \be \sum_p \sum_m \frac{\log p}{p^m} \ \ll \ \sum_p \frac{\log p}{p^3} \ \ll \ 1. \ee This proves \eqref{eq:s1deltaphiprincnonprinc}.
\ \\

For $K/E$ an extension of number fields and $\mathfrak{p}$ a prime ideal of $\mathcal{O}_{K}$, we denote by $f_{K/E}(\mathfrak{p})$ the residue degree of $\mathfrak{p}$ over $E$, so that $N^{K}_{E}(\mathfrak{p}) = \mathfrak{q}^{f_{K/E}(\mathfrak{p})}$, where $\mathfrak{q} = \mathfrak{p} \cap \mathcal{O}_{E}$. Notice that
\begin{equation}
\sum_{\substack{\mathfrak{p}\ {\rm non-principal} \\ \mathfrak{p}^2\ {\rm principal}}}\frac{\log N\mathfrak{p}}{N\mathfrak{p}}\widehat{\phi}\left(2  \frac{\log N\mathfrak{p}}{\log \Delta}\right)\ =\ \nonumber\\
\sum_{\substack{\mathfrak{p}\ {\rm non-principal} \\ \mathfrak{p}^2\ {\rm principal} \\ f_{K/\mathbb{Q}}(\mathfrak{p})=1}}\frac{\log N\mathfrak{p}}{N\mathfrak{p}}\widehat{\phi}\left(2 \frac{\log N\mathfrak{p}}{\log \Delta}\right) + O(1)
\end{equation}
since the complementary sum is again bounded up to a constant by the convergent series $\sum_p \frac{\log p}{p^2}$. By the compact support of $\hphi$, we have
\be
\sum_{\substack{\mathfrak{p}\ {\rm non-principal} \\ \mathfrak{p}^2\ {\rm principal} \\ f_{K/\mathbb{Q}}(\mathfrak{p})=1}}\frac{\log N\mathfrak{p}}{N\mathfrak{p}}\widehat{\phi}\left(2  \frac{\log N\mathfrak{p}}{\log \Delta}\right)\ =\ \sum_{\substack{\mathfrak{p}\ {\rm non-principal} \\ \mathfrak{p}^2\ {\rm principal} \\ f_{K/\mathbb{Q}}(\mathfrak{p})=1 \\ \log N\mathfrak{p} < \frac{\sigma\log\Delta}{2}}}\frac{\log N\mathfrak{p}}{N\mathfrak{p}}\widehat{\phi}\left(2 \frac{\log N\mathfrak{p}}{\log \Delta}\right).
\ee
Let $\mathfrak{p}$ be a prime of degree 1 over $\mathbb{Q}$ such that $\mathfrak{p}^2$ is principal, say $\mathfrak{p}^2 = (\alpha)$. Either $\alpha \in \mathcal{O}_{K^+}$ or $\alpha \in \mathcal{O}_{K} \setminus \mathcal{O}_{K^+}$. Denote these contributions by $S_{1,1}(\Delta,\phi)$ and $S_{1,2}(\Delta,\phi)$.

Suppose first that $\alpha \in \mathcal{O}_{K^+}$. Then $\alpha\mathcal{O}_{K^+}$ is a prime ideal of $\mathcal{O}_{K^+}$ since $N_{K/\mathbb{Q}}(\mathfrak{p})^2 = N_{K^{+}/\mathbb{Q}}(\alpha)^2$, and it ramifies in $K$. Therefore, since $f_{K/\mathbb{Q}}(\mathfrak{p}) = 1$ implies that $p =N\mathfrak{p}$ is a rational prime, $p$ ramifies in $K$. As the ramified rational primes in $K$ are precisely those dividing $\Delta$, we find
\begin{eqnarray}
S_{1,1}(\Delta,\phi) &\ :=\ & \sum_{\substack{\mathfrak{p}\ {\rm non-principal} \\ \mathfrak{p}^2 = (\alpha), \alpha\in\mathcal{O}_{K^+} \\ f_{K/\mathbb{Q}}(\mathfrak{p})=1 \\ \log N\mathfrak{p} < \frac{\sigma\log\Delta}{2} \\}}\frac{\log N\mathfrak{p}}{N\mathfrak{p}}\widehat{\phi}\left(2 \frac{\log N\mathfrak{p}}{\log \Delta}\right)\\
& \ \ll \  & \displaystyle\sum_{\substack{p \\ p\,|\,\Delta}}\frac{\log p}{p}\ =\ O(\log\log\Delta),
\end{eqnarray}
where we used the standard fact\footnote{Note $\frac{\log u}{u}$ is decreasing for $u \ge 3$, so the sum is maximized when $\Delta$ is a primorial. If $2\cdot 3 \cdots p_r = \Delta$ then $p_r \sim \log \Delta$, and the claim follows from partial summation.} that $\sum_{p|\Delta}\frac{\log p}{p} \ll \log\log \Delta$.

Now consider the case when $\alpha \in \mathcal{O}_{K}\backslash\mathcal{O}_{K^+}$. Let
\be
S_{1,2}(\Delta,\phi)\ :=\ \sum_{\substack{\mathfrak{p}\ {\rm non-principal} \\ \mathfrak{p}^2 = (\alpha), \alpha\in\mathcal{O}_{K}\backslash\mathcal{O}_{K^+} \\ f_{K/\mathbb{Q}}(\mathfrak{p})=1 \\ \log N\mathfrak{p} < \frac{\sigma\log\Delta}{2} \\}}\frac{\log N\mathfrak{p}}{N\mathfrak{p}}\widehat{\phi}\left(2 \frac{\log N\mathfrak{p}}{\log \Delta}\right).
\ee
In this situation, we have $N_{K/\mathbb{Q}}(\mathfrak{p})^2 = N_{K/\mathbb{Q}}(\alpha)$, so the proof of Lemma \ref{lem: key} shows that $N\mathfrak{p} \geq C\sqrt{\Delta}$, where $C$ is a positive constant independent of $\Delta$. Hence, since $\sigma < 1$, the condition $\log N\mathfrak{p} < \frac{\sigma\log\Delta}{2}$ on the sum implies that $S_{1,2}(\Delta,\phi)$ is zero for sufficiently large $\Delta$. Putting things together, we have for $\sigma < 1$ that
\begin{eqnarray}
S_{1}(\Delta, \phi)  \ = \ S_{1,1}(\Delta,\phi) + S_{1,2}(\Delta,\phi) + O(1) \ = \  O(\log\log\Delta),
\end{eqnarray}
which proves the claim.
\end{proof}

\subsection{Reduction of $S_{2}$}\label{sec: S_2}

In this subsection we replace $S_2$ with sums which are easier to evaluate. We determine those sums in the next subsection, which will complete the analysis of $S_2$.

We write $S_{2}$ as a sum
\begin{equation}\label{eq:diff}
S_{2}(\Delta, \phi)\ =\ S_{2,1}(\Delta, \phi) + S_{2,2}(\Delta, \phi)
\end{equation}
where
\begin{eqnarray}\label{eq:terms}
S_{2,1}(\Delta, \phi) & \ := \ & -2\displaystyle\sum_{\substack{\mathfrak{p} \ \text{principal}}}\log N\mathfrak{p}\sum_{\substack{m \geq 1 \\ (m,h_{\Delta}) = 1}}{\frac{\widehat{\phi}\left( m \frac{\log N\mathfrak{p}}{\log \Delta}\right)}{N\mathfrak{p}^{m/2}}} \nonumber\\
S_{2,2}(\Delta, \phi) & := & -2\displaystyle\sum_{\substack{\mathfrak{p} \ \text{principal}}} \log N\mathfrak{p}\sum_{\substack{m \geq 1\\ (m,h_\Delta)>1}} {\frac{\widehat{\phi}\left( m \frac{\log N\mathfrak{p}}{\log \Delta}\right)}{N\mathfrak{p}^{m/2}}}.
\end{eqnarray}
Note that the proof of Lemma \ref{lem: S_1} did not actually use the non-principality of the prime ideals involved in the sum, but only the fact that the primes have principal square, as well as Lemma \ref{lem: key} and the fact that the sum began at $m = 2$. Since the principality of $\mathfrak{p}$ of course implies the principality of $\mathfrak{p}^2$, and since the condition $(m,h) > 1$ in the definition of $S_{2,2}(\Delta,\phi)$ implies that the sum again begins at least at $m = 2$, the same argument given in Lemma \ref{lem: S_1} shows that
\be\label{eq:S_22}
S_{2,2}(\Delta,\phi)\ \ll \ \displaystyle\sum_{\substack{p \\ p\,|\,\Delta}}\frac{\log p}{p}\ =\ O(\log\log\Delta).
\ee

We now analyze $S_{2,1}(\Delta,\phi)$. Note that
\begin{eqnarray}
S_{2,1}(\Delta, \phi) & \ = \ & -2\displaystyle\sum_{\substack{\mathfrak{p} \ \text{principal} \\ f_{K/\mathbb{Q}}(\mathfrak{p}) \leq 2}}\log N\mathfrak{p}\sum_{\substack{m \geq 1 \\ (m,h_{\Delta}) = 1}} {\frac{\widehat{\phi}\left( m \frac{\log N\mathfrak{p}}{\log \Delta}\right)}{N\mathfrak{p}^{m/2}}} + O(1)
\end{eqnarray}
since, as before (see Lemma \ref{lem: S_1}), the sum $\displaystyle\sum_{\substack{\mathfrak{p} \ \text{principal} \\ f_{K/\mathbb{Q}}(\mathfrak{p}) > 2}}\log N\mathfrak{p}\sum_{\substack{m \geq 1 \\ (m,h_{\Delta}) = 1}} {\frac{\widehat{\phi}\left( m \frac{\log N\mathfrak{p}}{\log \Delta}\right)}{N\mathfrak{p}^{m/2}}}$ is bounded by a convergent series. Moreover, observe that
\begin{equation}
\sum_{\substack{\mathfrak{p} \ \text{principal} \\ f_{K/\mathbb{Q}}(\mathfrak{p}) = 1}}\log N\mathfrak{p}\sum_{\substack{m \geq 1 \\ (m,h_{\Delta}) = 1}} {\frac{\widehat{\phi}\left( m \frac{\log N\mathfrak{p}}{\log \Delta}\right)}{N\mathfrak{p}^{m/2}}}\ =\ \sum_{\substack{\mathfrak{p} \ \text{principal} \\ f_{K/\mathbb{Q}}(\mathfrak{p}) = 1 \\ N\mathfrak{p} < \Delta^{\sigma}}}\log N\mathfrak{p}\sum_{\substack{m \geq 1 \\ (m,h_{\Delta}) = 1}} {\frac{\widehat{\phi}\left( m \frac{\log N\mathfrak{p}}{\log \Delta}\right)}{N\mathfrak{p}^{m/2}}},
\end{equation}
and if $\sigma < 1$ then this sum is zero for sufficiently large $\Delta$ by Lemma \ref{lem: key}.
    Thus, letting
\begin{equation}
S_{2,1}(\Delta,\phi)_2 \ = \ -2\sum_{\substack{\mathfrak{p}\ {\rm principal} \\ f_{K/\mathbb{Q}}(\mathfrak{p}) = 2}}\log N\mathfrak{p}\sum_{\substack{m \geq 1 \\ (m,h_{\Delta}) = 1}} {\frac{\widehat{\phi}\left( m\frac{\log N\mathfrak{p}}{\log\Delta}\right)}{N\mathfrak{p}^{m/2}}},
\end{equation}
we find that
\begin{equation}
S_{2,1}(\Delta,\phi)\ =\ S_{2,1}(\Delta,\phi)_{2} + O(1)
\end{equation}
and so, by \eqref{eq:diff} and \eqref{eq:S_22}, we find that
\be\label{eq:S_2}
S_{2}(\Delta,\phi)\ =\ S_{2,1}(\Delta,\phi)_{2} + O(\log\log\Delta).
\ee
\begin{prop}
We have
\begin{equation}\label{eq:expansionS21}
S_{2,1}(\Delta,\phi)_2\ =\ -2\sum_{\substack{\mathfrak{p}\ {\rm principal} \\ f_{K/\mathbb{Q}}(\mathfrak{p}) = 2}}{\frac{\log N\mathfrak{p}}{N\mathfrak{p}^{1/2}}}\widehat{\phi}\left(\frac{\log N\mathfrak{p}}{\log\Delta}\right) + O(1).
\end{equation}
\label{prop: 6.1}
\end{prop}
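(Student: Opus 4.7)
The plan is to split the inner $m$-sum in $S_{2,1}(\Delta,\phi)_2$ into the single term $m=1$ (which will produce the stated main term) and the tail $m \geq 2$ (which I will show contributes only $O(1)$). Since $(1,h_\Delta) = 1$ automatically, the $m = 1$ term reads exactly
\[
-2\sum_{\substack{\mathfrak{p}\ {\rm principal} \\ f_{K/\mathbb{Q}}(\mathfrak{p}) = 2}}\frac{\log N\mathfrak{p}}{N\mathfrak{p}^{1/2}}\widehat{\phi}\!\left(\frac{\log N\mathfrak{p}}{\log\Delta}\right),
\]
which is precisely the right-hand side of \eqref{eq:expansionS21}. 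It therefore suffices to absorb the $m \geq 2$ portion into the $O(1)$ error.

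For the tail, the key structural observation is that any prime $\mathfrak{p}$ of $K$ with $f_{K/\mathbb{Q}}(\mathfrak{p}) = 2$ satisfies $N\mathfrak{p} = p^2$ for some rational prime $p$, and since $[K:\mathbb{Q}] = 2N$, at most $N$ such primes lie above any given $p$. Using the boundedness of $\widehat{\phi}$ (valid since $\phi \in \mathcal{S}(\mathbb{R})$) and the fact that dropping the constraint $(m,h_\Delta)=1$ only enlarges a sum of non-negative majorants, I would bound the $m \geq 2$ portion by a constant multiple of
\[
\sum_{m\geq 2}\sum_p \frac{\log p}{p^{m}} \;\ll\; \sum_p \frac{\log p}{p^2} \;=\; O(1),
\]
since for each fixed $p$ the geometric sum $\sum_{m\geq 2}p^{-m}$ is $\ll p^{-2}$.

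The argument is entirely routine and no serious obstacle is anticipated. The only point requiring attention is the observation that $f_{K/\mathbb{Q}}(\mathfrak{p}) = 2$ forces $N\mathfrak{p} = p^2$, so the apparently unwieldy sum $\sum_{\mathfrak{p}}\log N\mathfrak{p}/N\mathfrak{p}^{m/2}$ is in fact comparable to $\sum_p \log p/p^m$ and hence summable for each $m \geq 2$; the coprimality condition $(m,h_\Delta)=1$ plays no role whatsoever and is merely discarded in the majorization.
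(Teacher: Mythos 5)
Your proposal is correct and follows essentially the same route as the paper: isolate the $m=1$ term (where the coprimality condition is automatic) as the main term, then bound the $m\geq 2$ tail using the boundedness of $\widehat{\phi}$, the geometric series, and the fact that a degree-two prime has $N\mathfrak{p}=p^{2}$ with at most $N$ such primes over each rational $p$, yielding convergence via $\sum_{p}\log p/p^{2}$. No gaps.
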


\begin{proof}
Let $A(\Delta,\phi)$ be the difference between $S_{2,1}(\Delta,\phi)_2$ and the main term on the right hand side of \eqref{eq:expansionS21}. Thus
\begin{equation}
A(\Delta,\phi)\ =\ -2\sum_{\substack{\mathfrak{p}\ {\rm principal} \\ f_{K/\mathbb{Q}}(\mathfrak{p}) = 2}}\log N\mathfrak{p}\sum_{\substack{m \geq 2 \\ (m,h_{\Delta}) = 1}} {\frac{\widehat{\phi}\left(m\frac{\log N\mathfrak{p}}{\log\Delta}\right)}{N\mathfrak{p}^{m/2}}}.
\end{equation}
Since $\widehat{\phi}$ is bounded and $N\mathfrak{p} \ge 2$, we have
\begin{eqnarray}
A(\Delta,\phi) &\ \ll\ &\sum_{\substack{\mathfrak{p}\ {\rm principal} \\ f_{K/\mathbb{Q}}(\mathfrak{p}) = 2}}\log N\mathfrak{p}\sum_{m=2}^{\infty} {\frac{1}{N\mathfrak{p}^{m/2}}} \nonumber\\
& \ll & \sum_{\substack{\mathfrak{p} \ {\rm principal} \\ f_{K/\mathbb{Q}}(\mathfrak{p}) = 2}}\frac{\log N\mathfrak{p}}{N\mathfrak{p}},
\end{eqnarray}
where the last statement is derived by summing the geometric series. Since each rational prime $p$ has at most $N$ prime ideals of degree 2 lying above it in $K$, we find
\begin{equation}
A(\Delta,\phi)\ \ll\ \sum_{p}\frac{N\log p}{p^2}.
\end{equation}
This sum is convergent, since it is dominated by a convergent series. Hence $A(\Delta,\phi) = O(1)$ as claimed.
\end{proof}

We now express $S_{2,1}(\Delta,\phi)_2$ in terms of primes of $K^+$.

\begin{prop} We have
\begin{equation}\label{eq:s21deltaphiqinert}
S_{2,1}(\Delta,\phi)_2 \ =\ -2\Bigg[ 2\sum_{\substack{\mathfrak{q} \subset \mathcal{O}_{K^+} \\ \mathfrak{q}\ {\rm inert\ in\ }K \\ f_{{K^+}/\mathbb{Q}}(\mathfrak{q})=1}}\frac{\log N\mathfrak{q}}{N\mathfrak{q}}\widehat{\phi}\left(2\frac{\log N\mathfrak{q}}{\log\Delta}\right)\Bigg] + O(\log\log\Delta).
\end{equation}
\label{prop: 6.2}
\end{prop}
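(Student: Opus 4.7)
The plan is to split the sum over principal degree-$2$ primes $\mathfrak{p}$ of $K$ defining $S_{2,1}(\Delta,\phi)_2$ according to how $\mathfrak{q} := \mathfrak{p}\cap \mathcal{O}_{K^+}$ decomposes in the quadratic extension $K/K^+$. Since $f_{K/\mathbb{Q}}(\mathfrak{p}) = f_{K/K^+}(\mathfrak{p})\cdot f_{K^+/\mathbb{Q}}(\mathfrak{q}) = 2$, exactly one of the following holds: (a) $\mathfrak{q}$ is inert in $K$ and $f_{K^+/\mathbb{Q}}(\mathfrak{q}) = 1$; (b) $\mathfrak{q}$ splits in $K$ and $f_{K^+/\mathbb{Q}}(\mathfrak{q}) = 2$; or (c) $\mathfrak{q}$ ramifies in $K$ and $f_{K^+/\mathbb{Q}}(\mathfrak{q}) = 2$.

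I would first handle case (a), which produces the asserted main term. Because $K^+ = K_0$ has class number one, every such $\mathfrak{q}$ is principal, say $\mathfrak{q} = (\alpha)$, so $\mathfrak{p} = \mathfrak{q}\mathcal{O}_K = (\alpha)\mathcal{O}_K$ is automatically principal; conversely every principal case-(a) prime $\mathfrak{p}$ arises from a unique such $\mathfrak{q}$. Substituting $N\mathfrak{p} = N\mathfrak{q}^{2}$ and $\log N\mathfrak{p} = 2\log N\mathfrak{q}$ into the displayed formula for $S_{2,1}(\Delta,\phi)_2$ transforms the case-(a) contribution into precisely $-4\sum_{\mathfrak{q}}\frac{\log N\mathfrak{q}}{N\mathfrak{q}}\widehat{\phi}\bigl(2\frac{\log N\mathfrak{q}}{\log\Delta}\bigr)$, matching the main term on the right-hand side of \eqref{eq:s21deltaphiqinert}.

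Next I would dispose of case (c) by a trivial-plus-divisor bound. Only finitely many primes of $K^+$ ramify in $K$, namely the divisors of the relative different $\mathfrak{d}_{K/K^+}$, and the rational primes lying below them all divide $\Delta$. Since each rational prime $q\mid\Delta$ has at most $N$ primes $\mathfrak{q}$ of $K^+$ above it, and since, for such $\mathfrak{q}$, a prime $\mathfrak{p}$ over it satisfies $N\mathfrak{p}^{1/2} = N\mathfrak{q}^{1/2} = q$, the entire case-(c) contribution is dominated in absolute value by $\sum_{q\mid\Delta}\frac{2N\log q}{q} = O(\log\log\Delta)$ by the Mertens-type estimate already invoked in Lemma \ref{lem: S_1}.

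The main obstacle is case (b), where the task is to show the contribution is empty for $\Delta$ large. This requires adapting the CM-norm computation of Lemma \ref{lem: key}. If $\mathfrak{p}$ is principal in case (b) and (say) $\mathcal{O}_K = \mathcal{O}_{K^+}[\sqrt{\beta}]$, write $\mathfrak{p} = (x + y\sqrt{\beta})$ with $x,y\in\mathcal{O}_{K^+}$. Since $f_{K/K^+}(\mathfrak{p}) = 1$, we have $N^{K}_{K^+}(\mathfrak{p}) = \mathfrak{q}$, so $\mathfrak{q} = (x^2 - \beta y^2)$. If $y=0$ then $\mathfrak{q}\mathcal{O}_K = (x)\mathcal{O}_K$ is prime, forcing $\mathfrak{q}$ to be inert, contrary to (b); hence $y\neq 0$. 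The same totally-negative-$\beta$ positivity chain as in Lemma \ref{lem: key} then gives
\[
N\mathfrak{p} \;=\; \bigl|N^{K^+}_\mathbb{Q}(x^2 - \beta y^2)\bigr| \;\geq\; \bigl|N^{K^+}_\mathbb{Q}(\beta)\bigr| \;\geq\; C\Delta
\]
for a constant $C$ depending only on $K^+$. Because $\widehat\phi$ is supported in $(-\sigma,\sigma)$ with $\sigma<1$, the nonvanishing condition $\log N\mathfrak{p} < \sigma\log\Delta$ fails once $\Delta$ is large enough, so case (b) is vacuous in the limit. Combining the three cases yields the proposition.
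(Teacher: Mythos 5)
Your proposal is correct and follows essentially the same route as the paper: the paper likewise splits according to $f_{K/K^+}(\mathfrak{p})$ (equivalently, whether $\mathfrak{q}=\mathfrak{p}\cap\mathcal{O}_{K^+}$ is inert, split, or ramified in $K$), kills the split case via the norm bound underlying Lemma \ref{lem: key}, bounds the ramified case by $\sum_{q\mid\Delta}\log q/q\ll\log\log\Delta$, and uses $h_{K^+}=1$ to identify the inert contribution with the stated main term. The only point to tighten is that $S_{2,1}(\Delta,\phi)_2$ still contains the sum over $m\ge 2$ with $(m,h_\Delta)=1$, so the inert-case contribution is not \emph{precisely} the main term; you should either first invoke Proposition \ref{prop: 6.1} (as the paper does) or note that these higher-$m$ terms are bounded by a convergent series and hence are $O(1)$, which is harmless against the $O(\log\log\Delta)$ error.
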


\begin{proof}
Let $M(\Delta,\phi)$ be the main term in the expression for $S_{2,1}(\Delta,\phi)_2$ given by Proposition \ref{prop: 6.1}:
\begin{equation}
M(\Delta,\phi) \ =\ -2\sum_{\substack{\mathfrak{p} \ \text{principal} \\ f_{K/\mathbb{Q}}(\mathfrak{p}) = 2}}{\frac{\log N\mathfrak{p}}{N\mathfrak{p}^{1/2}}}\widehat{\phi}\left(\frac{\log N\mathfrak{p}}{\log\Delta}\right).
\end{equation}
Divide this sum by degree over $K^+$:
\begin{eqnarray}
M(\Delta,\phi) & \ = \ & -2\Bigg[\sum_{\substack{\mathfrak{p} \ \text{principal} \\ f_{K/K^{+}}(\mathfrak{p})=f_{K/\mathbb{Q}}(\mathfrak{p}) = 2}}{\frac{\log N\mathfrak{p}}{N\mathfrak{p}^{1/2}}}\widehat{\phi}\left( \frac{\log N\mathfrak{p}}{\log\Delta}\right) \nonumber\\ & & \ \ \ +\ \sum_{\substack{\mathfrak{p} \ \text{principal} \\ f_{K/K^{+}}(\mathfrak{p})=1, f_{K/\mathbb{Q}}(\mathfrak{p}) = 2}}{\frac{\log N\mathfrak{p}}{N\mathfrak{p}^{1/2}}}\widehat{\phi}\left( \frac{\log N\mathfrak{p}}{\log\Delta}\right)\Bigg] \nonumber\\ & \ := \ & M_1(\Delta,\phi) + M_2(\Delta,\phi). \ \ \ \ \
\end{eqnarray}

For $M_2(\Delta,\phi)$, $f_{K/K^{+}}(\mathfrak{p})=1$ implies that $\mathfrak{q} = \mathfrak{p} \cap \mathcal{O}_{K^+}$ either splits or is ramified in $K$. It follows as before from Lemma \ref{lem: key} that the contribution from split primes is zero for large enough $\Delta$ as ${\rm supp}(\hphi) \subset (-1, 1)$. The contribution from those $\mathfrak{p}$ which lie over ramified primes in $K^+$ and for which $f_{K/\mathbb{Q}}(\mathfrak{p})=2$ is bounded (up to a constant) by
\begin{equation}
\sum_{p\,|\,\Delta}\frac{\log p}{p}\ \ll\ \log\log\Delta.
\end{equation}
Therefore $M_2(\Delta,\phi) = O(\log\log\Delta)$.

Denote the main term in \eqref{eq:s21deltaphiqinert} by $M'(\Delta,\phi)$, so
\begin{equation}
M'(\Delta,\phi) \ :=\ -2\left[2 \sum_{\substack{\mathfrak{q} \subset \mathcal{O}_{K^+} \\ \mathfrak{q}\ \text{inert in K} \\ f_{{K^+}/\mathbb{Q}}(\mathfrak{q})= 1}}\frac{\log N\mathfrak{q}}{N\mathfrak{q}}\widehat{\phi}\left(2\frac{\log N\mathfrak{q}}{\log\Delta}\right)\right].
\end{equation} As $M_2(\Delta,\phi) = O(\log\log\Delta)$ it suffices to show $M'(\Delta,\phi) = M_1(\Delta,\phi)$ to complete the proof.

Let $\mathfrak{q}$ be a prime of $K^+$ of degree 1 over $\mathbb{Q}$ that is inert in $K$. Then, since $h_{K^+} = 1$, $\mathfrak{p} = \mathfrak{q}\mathcal{O}_{K}$ is principal. Moreover, $f_{K/K^+}(\mathfrak{p}) = f_{K/\mathbb{Q}}(\mathfrak{p}) = 2$ and $N\mathfrak{p} = N\mathfrak{q}^2$. Conversely, if $\mathfrak{p}$ is a prime of $K$ such that $f_{K/K^+}(\mathfrak{p}) = f_{K/\mathbb{Q}}(\mathfrak{p}) = 2$, then $\mathfrak{q} = \mathfrak{p} \cap \mathcal{O}_{K^+}$ has degree 1 over $\mathbb{Q}$ and is inert in $K$. Therefore
\begin{eqnarray}
M'(\Delta,\phi) & \ = \ & -2\Bigg[2\sum_{\substack{\mathfrak{p} \subset\mathcal{O}_{K} \ \text{principal} \\ f_{K/K^{+}}(\mathfrak{p})=f_{K/\mathbb{Q}}(\mathfrak{p}) = 2}}{\frac{\log (N\mathfrak{p}^{1/2})}{N\mathfrak{p}^{1/2}}}\widehat{\phi}\left(2 \frac{\log (N\mathfrak{p}^{1/2})}{\log\Delta}\right)\Bigg]\nonumber\\
&=& -2 \Bigg[\sum_{\substack{\mathfrak{p} \ \text{principal} \\ f_{K/K^{+}}(\mathfrak{p})=f_{K/\mathbb{Q}}(\mathfrak{p}) = 2}}{\frac{\log N\mathfrak{p}}{N\mathfrak{p}^{1/2}}}\widehat{\phi}\left(\frac{\log N\mathfrak{p}}{\log\Delta}\right)\Bigg] = M_{1}(\Delta,\phi).\ \ \ 
\end{eqnarray}
Hence, $S_{2,1}(\Delta,\phi)_2 = M(\Delta,\phi) + O(1) = M_{1}(\Delta,\phi) + M_{2}(\Delta,\phi) + O(1) = M'(\Delta,\phi) + O(\log\log\Delta)$, as claimed.
\end{proof}

\subsection{Evaluation of $S_2$}\label{sec:S_2}

We now complete the analysis of $S_2$. Let $\chi$ be the unique non-trivial character of $G := \text{Gal}(K/K^{+})$. For $\mathfrak{q}$ a prime of $K^{+}$ unramified in $K$, define $\chi(\mathfrak{q}) := \chi\left(\left(\displaystyle\frac{\mathfrak{q}}{K/K^{+}}\right)\right)$ where $\left(\displaystyle\frac{\mathfrak{q}}{K/K^{+}}\right)$ is the Artin symbol. Thus
\begin{equation}
\chi(\mathfrak{q}) = \left\{ \begin{array}{rl}
  -1 &\text{if} \ \mathfrak{q} \ \text{is inert in} \ K \\
  1 &\text{if} \ \mathfrak{q} \ \text{splits in} \ K.
       \end{array} \right.
\end{equation}
The Artin L-function associated to $\chi$ is
\begin{equation}
L(s,\chi)\ =\ \prod_{\substack{\mathfrak{q} \ \text{unramified in $K$}}}\left(1 - \frac{\chi(\mathfrak{q})}{N\mathfrak{q}^s}\right)^{-1}.
\end{equation}
Since $\chi$ is the character of a non-trivial one-dimensional representation of $G$, $L(s,\chi)$ is entire and has no zeros on the line $\Re s=1$. Define a function $U(s)$ by
\begin{equation}
U(s)\ =\ (s-1)\frac{\zeta_{K^+}(s)}{L(s,\chi)\zeta_{\text{ram}}(s)}.
\end{equation}
Here $\zeta_{\text{ram}}(s)$ is given by the partial Euler product for $\zeta_{K^+}(s)$ restricted to those primes which ramify in $K$. One has (\cite{La1}, pg. 161) that $\zeta_{K^+}(s)$ is analytic for $\Re s > 1 - 1/N$ except for a simple pole at $s=1$. Since the factor of $(s-1)$ cancels this pole, $U(s)$ is analytic for $\Re s > 1 - 1/N$. In this region, we have
\begin{equation}\label{eq:Uprod}
U(s)\ =\ (s-1)\prod_{\mathfrak{q} \ \text{inert in $K$}}\left(\frac{N\mathfrak{q}^{s}-1}{N\mathfrak{q}^{s}+1}\right)^{-1}.
\end{equation}
Therefore, for $\Re s > 1 - 1/N$ one has
\begin{equation}\label{eq:uprimeu}
\frac{U'}{U}(s)\ =\ \frac{1}{s-1} -2 \sum_{\mathfrak{q} \ \text{inert in $K$}}\sum_{m=0}^{\infty}\frac{\log N\mathfrak{q}}{(N\mathfrak{q}^s)^{2m+1}}.
\end{equation}
Consider the integral
\begin{equation}
\int_{-\infty}^{\infty}\phi(x)\frac{U'}{U}\left(1 + \frac{4\pi ix}{\log\Delta}\right)dx.
\end{equation}
We substitute the expansion from \eqref{eq:uprimeu} above. The first piece is the integral \be \int_{-\infty}^{\infty} \phi(x) \frac{\log \Delta dx}{4\pi i x}  \ = \ \frac{\log \Delta}2 \frac1{2\pi i} \int_{-\infty}^{\infty} \phi(x) \frac{dx}{x}, \ee which is just $\frac14 \phi(0)\log \Delta$ from complex analysis.\footnote{Remember that $\phi$ is an even function. The extra factor of $1/2$ is due to the pole lying on the line of integration.} The second piece becomes the integral of $\phi(x)$ against factors such as $(N\mathfrak{q})^{s(2m+1)}$ with $s = 1 + \frac{4\pi i x}{\log \Delta}$. The integration against $x$ gives the Fourier transform of $\phi$. Specifically, these terms contribute
\begin{equation}\label{eq:U'/U}
\frac{1}{4}\phi(0)\log\Delta - 2\sum_{\mathfrak{q} \ \text{inert in $K$}}\sum_{m=0}^{\infty}\frac{\log N\mathfrak{q}}{N\mathfrak{q}^{2m+1}}\widehat{\phi}\left(2(2m+1)\frac{\log N\mathfrak{q}}{\log\Delta}\right),
\end{equation}
where $\displaystyle\frac{1}{4}\phi(0)\log\Delta$ appears as half the residue of $\displaystyle\frac{1}{2}\phi(s)s^{-1}\log\Delta$ at $s=0$. Similarly to the above, one has
\begin{eqnarray}
&  & \sum_{\mathfrak{q} \ \text{inert in $K$}}\sum_{m=0}^{\infty}\frac{\log N\mathfrak{q}}{N\mathfrak{q}^{2m+1}}\widehat{\phi}\left(2(2m+1)\frac{\log N\mathfrak{q}}{\log\Delta}\right)\nonumber\\  &= \ & \sum_{\mathfrak{q} \ \text{inert in $K$}}\frac{\log N\mathfrak{q}}{N\mathfrak{q}}\widehat{\phi}\left(2\frac{\log N\mathfrak{q}}{\log\Delta}\right) + O(1) \nonumber\\
&= \ & \sum_{\substack{\mathfrak{q} \ \text{inert in $K$} \\ f_{K^{+}/\mathbb{Q}}(\mathfrak{q})=1}}\frac{\log N\mathfrak{q}}{N\mathfrak{q}}\widehat{\phi}\left(2\frac{\log N\mathfrak{q}}{\log\Delta}\right) + O(1).
\end{eqnarray}
Therefore, by Proposition \ref{prop: 6.2}, we have shown

\begin{lem}
\begin{equation}
S_{2,1}(\Delta,\phi)_2\ =\  -\frac{1}{2}\phi(0)\log\Delta + 2\int_{-\infty}^{\infty}\phi(x)\frac{U'}{U}\left(1 + \frac{4\pi ix}{\log\Delta}\right)dx + O(\log\log\Delta).
\end{equation}
\end{lem}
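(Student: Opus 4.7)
The plan is to start from Proposition \ref{prop: 6.2} and verify that
\be
2\int_{-\infty}^{\infty}\phi(x)\frac{U'}{U}\!\left(1+\frac{4\pi ix}{\log\Delta}\right)dx\ =\ \tfrac{1}{2}\phi(0)\log\Delta + M'(\Delta,\phi) + O(1),
\ee
where $M'(\Delta,\phi)$ denotes the main term of Proposition \ref{prop: 6.2}. Rearranging and combining with that proposition then immediately produces the claimed identity (absorbing the $O(1)$ into the $O(\log\log\Delta)$ error). The strategy is to substitute the series expansion \eqref{eq:uprimeu} of $U'/U$ into the integrand and handle the two pieces separately.

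For the pole term $1/(s-1)$, the substitution yields $\int\phi(x)\frac{2\log\Delta}{4\pi i x}\,dx$. Since $\phi$ is even the principal value vanishes, but the integrand has a simple pole on the contour at $x=0$; interpreting the integral via a symmetric indentation (equivalently, via the Sokhotski--Plemelj formula, or by shifting the contour to $\mathrm{Re}(s)=1+\epsilon$ and taking $\epsilon\to 0^+$) extracts half the residue there, which is exactly $\tfrac{1}{2}\phi(0)\log\Delta$. For the Dirichlet-series term, I would swap sum and integral (justified by absolute convergence of the product \eqref{eq:Uprod} on $\mathrm{Re}(s)=1$, which lies inside its region of analyticity $\mathrm{Re}(s)>1-1/N$) and apply the Fourier identity
\be
\int_{-\infty}^{\infty}\phi(x)\,N\mathfrak{q}^{-(2m+1)\cdot 4\pi ix/\log\Delta}\,dx\ =\ \widehat{\phi}\!\left(2(2m+1)\frac{\log N\mathfrak{q}}{\log\Delta}\right),
\ee
converting the Dirichlet-series contribution into a double sum of $\widehat\phi$-values weighted by $\log N\mathfrak{q}/N\mathfrak{q}^{2m+1}$.

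The $m\geq 1$ tail contributes at most a constant times $\sum_{\mathfrak{q}}\log N\mathfrak{q}/N\mathfrak{q}^{3}\ll\sum_p \log p/p^3\ll 1$, using boundedness of $\widehat\phi$ and the fact that at most $N$ primes of $K^+$ lie over any rational prime. Likewise, inert primes $\mathfrak{q}$ with $f_{K^+/\mathbb{Q}}(\mathfrak{q})\geq 2$ contribute $\ll\sum_p \log p/p^2\ll 1$. What survives these reductions is precisely $M'(\Delta,\phi)$, establishing the displayed identity above.

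The main technical subtlety I anticipate is the treatment of the pole of $U'/U$ sitting on the line of integration; the factor $1/2$ in front of $\phi(0)\log\Delta$ has to be tracked carefully, but once an interpretation of the integral is fixed the residue calculation is routine. Everything else amounts to bookkeeping of absolutely convergent tails, together with the already-proved matching between degree-1 inert primes of $K^+$ and the sum in Proposition \ref{prop: 6.2}.
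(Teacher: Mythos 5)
Your proposal is correct and follows essentially the same route as the paper: substitute the expansion \eqref{eq:uprimeu} into the integral, extract half the residue at the pole on the line to get $\tfrac14\phi(0)\log\Delta$ (doubled to $\tfrac12\phi(0)\log\Delta$), convert the Dirichlet-series piece into $\widehat{\phi}$-values, discard the $m\geq 1$ and higher-degree-prime tails as $O(1)$, and conclude via Proposition \ref{prop: 6.2}. The one small caveat is that the Euler product/Dirichlet series is not absolutely convergent on $\Re s=1$ itself (only for $\Re s>1$; the region $\Re s>1-1/N$ concerns analytic continuation), so the interchange of sum and integral should be justified exactly as in your parenthetical alternative, by working at $1+\epsilon$ and letting $\epsilon\to 0^{+}$ (the resulting sums are finite by the compact support of $\widehat{\phi}$).
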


Write
\begin{equation}\label{eq:U}
\frac{U'}{U}(s)\ =\ \frac{1}{s-1} + \frac{\zeta_{K^+}'}{\zeta_{K^+}}(s) - \frac{L'}{L}(s,\chi) - \frac{\zeta_{\text{ram}}'}{\zeta_{\text{ram}}}(s).
\end{equation}

We have the following important fact (Theorem 5.17 of \cite{IK}).

\begin{thm}
Assume the Generalized Riemann Hypothesis. Let $L(s,\rho)$ be the Artin $L$-function associated to a (possibly trivial) one-dimensional representation $\rho$ of $G$. Let $r$ be the order of the pole of this $L$-function at $s=1$, and let $\mathfrak{q}(\chi,s)$ be the analytic conductor of the associated Hecke character. Then
\begin{equation}
-\frac{L'}{L}(1+it,\rho)\ =\ \frac{r}{s-1} + O(\log\log\mathfrak{q}(\chi,s)),
\end{equation}
the implied constant being absolute.
\label{thm: IK}
\end{thm}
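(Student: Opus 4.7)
The plan is to derive this bound via Hadamard factorization combined with GRH-dependent estimates for sums over zeros, following the template of the standard estimate $|\zeta'/\zeta(1+it)| \ll (\log\log t)$ under RH. First I would take the logarithmic derivative of the Hadamard product for the completed $L$-function $\Lambda(s,\rho) = \mathfrak{q}(\rho)^{s/2}\gamma_{\rho}(s)L(s,\rho)$, obtaining an identity of the form
\begin{equation*}
-\frac{L'}{L}(s,\rho) \ = \ \frac{r}{s-1} + \frac{r}{s} + \frac{1}{2}\log \mathfrak{q}(\rho) - B + \frac{\gamma_{\rho}'}{\gamma_{\rho}}(s) - \sum_{\rho_L}\left(\frac{1}{s-\rho_L} + \frac{1}{\rho_L}\right),
\end{equation*}
where the sum runs over the non-trivial zeros $\rho_L$ of $L(s,\rho)$ and $B$ is the Hadamard constant. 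On the line $\Re s = 1$, Stirling's formula controls the gamma-factor contribution by $O(\log \mathfrak{q}(\chi,s))$, so all the work goes into extracting a sharper estimate for the sum over zeros.

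Next, under GRH we have $\Re \rho_L = 1/2$ for every non-trivial zero, hence $|1+it - \rho_L|^2 = 1/4 + (t-\Im\rho_L)^2 \geq 1/4$. Combined with the Riemann--von Mangoldt type counting estimate $N(T+1,\rho) - N(T,\rho) = O(\log \mathfrak{q}(\chi,1+iT))$, applying the identity above with $s$ replaced by $2 + it$ (where $L'/L$ is trivially bounded by an absolute constant) and subtracting yields the bound
\begin{equation*}
\sum_{|\Im\rho_L - t| \leq 1} \frac{1}{|s-\rho_L|} \ = \ O(\log \mathfrak{q}(\chi,1+it)),
\end{equation*}
which gives only the weaker estimate $O(\log \mathfrak{q})$.

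To reach the sharp $O(\log\log \mathfrak{q})$ bound I would invoke a Littlewood--Selberg type argument. The idea is to integrate $L'/L(\sigma + it,\rho)$ along a horizontal segment from $\sigma_0 = 1 + 1/\log\log \mathfrak{q}$ back to $\sigma = 1$, using on the one hand the trivial Dirichlet-series bound $|L'/L(\sigma_0 + it,\rho)| \ll \log\log \mathfrak{q}$, and on the other hand GRH to ensure a sufficiently wide zero-free region to the right of $\Re s = 1/2 + 1/\log\log \mathfrak{q}$. Combining these via a Backlund-type contour argument (or equivalently, applying Cauchy's integral formula to $\log L$ on a small circle around $1+it$, the validity of which rests on the GRH-derived control of $|\log L|$ in a neighborhood) converts the bound on $\log L$ into the desired bound on $L'/L$, absorbing the pole at $s=1$ into the explicit $r/(s-1)$ term.

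The main obstacle is precisely this last step: one needs the improvement from $\log$ to $\log\log$, which is not visible from the Hadamard product alone and requires using GRH in the structural way indicated above. The earlier ingredients (Hadamard factorization, Stirling for the archimedean factors, and the density-of-zeros bound) are routine and absolute in the implied constants, so once the Littlewood--Selberg step is carried out uniformly in the Hecke character data packaged into $\mathfrak{q}(\chi,s)$, the claim follows.
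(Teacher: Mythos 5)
A preliminary remark: the paper does not prove this statement at all --- it is quoted directly as Theorem 5.17 of Iwaniec--Kowalski \cite{IK} --- so there is no internal argument to compare yours with; what you have written is necessarily a reconstruction of the textbook proof, and it should be judged on its own completeness.

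Judged that way, there is a genuine gap, and it sits exactly where you acknowledge it: the passage from $O(\log\mathfrak{q})$ to $O(\log\log\mathfrak{q})$. The Hadamard/partial-fraction step, Stirling, and the zero-counting input are fine but, as you say, only give $O(\log\mathfrak{q}(\chi,s))$. Neither of the two repairs you gesture at closes the gap as stated. (i) A two-point comparison between $1+it$ and $\sigma_0+it$ cannot work: under GRH the difference of the zero sums is $\ll (\sigma_0-1)\log\mathfrak{q}$, while the Dirichlet-series bound at $\sigma_0$ is $\ll (\sigma_0-1)^{-1}$, and balancing these yields only $O(\sqrt{\log\mathfrak{q}})$; with your choice $\sigma_0-1=1/\log\log\mathfrak{q}$ the zero-sum term is of size $\log\mathfrak{q}/\log\log\mathfrak{q}$. (ii) The Borel--Carath\'eodory/Cauchy-formula variant, applied to $\log L$ on a circle of radius $\asymp 1/\log\log\mathfrak{q}$ about $1+it$ (where Littlewood-type bounds give $\log L \ll \log\log\mathfrak{q}$), produces $|L'/L(1+it)| \ll (\log\log\mathfrak{q})^2$, one factor worse than the claim, since one divides the bound for $\log L$ by the radius. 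Also, the phrase ``GRH to ensure a sufficiently wide zero-free region to the right of $\Re s = 1/2 + 1/\log\log\mathfrak{q}$'' is not the relevant mechanism: GRH gives the half-plane $\Re s>1/2$ outright, and the issue is quantitative, not the location of zeros. The standard proof --- the one behind the cited Theorem 5.17 --- uses instead a smoothed, truncated explicit formula: express $-\frac{L'}{L}(s,\rho)$ as a weighted sum of $\Lambda_\rho(n)n^{-s}$ over $n\le x$, plus the polar term $r/(s-1)$, plus a sum over nontrivial zeros carrying factors of size $x^{1/2-\Re s}$; choosing $x \asymp (\log\mathfrak{q})^{2}$, the prime-power sum is $\ll \log x \ll \log\log\mathfrak{q}$ (the coefficients being dominated by the von Mangoldt function of the field, with the bounded degree absorbed into the constant), while GRH makes the zero sum $\ll x^{-1/4}\log\mathfrak{q} = O(1)$. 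Without this (or an equally careful Littlewood--Selberg argument) your sketch proves only $O(\log\mathfrak{q})$ or, via (ii), $O((\log\log\mathfrak{q})^2)$ --- bounds which, incidentally, would still be adequate for the way the theorem is used in this paper, but which are not the statement you set out to prove.
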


In our situation, we have a factorization of the Dedekind zeta-function of $K$ just as in the case of imaginary quadratic fields:
\begin{equation}
\zeta_{K}(s)\ =\ \zeta_{K^+}(s)L(s,\chi),
\end{equation}
which may be proven by checking the local factors at each prime ideal of $K$. Thus every rational prime dividing $q(\chi)$ (the ordinary conductor) must also divide $\Delta$. But we also have $q(\chi) = |D_{K^{+}/\mathbb{Q}}|N^{K^+}_{\mathbb{Q}}\mathfrak{f}(\chi)$ for an integral ideal $\mathfrak{f}(\chi)$ of $K^+$ (\cite{IK}, pg. 142), and since each prime in the factorization of this ideal has degree at most $N$ over $\mathbb{Q}$, we find $q(\chi) \leq |D_{K^{+}/\mathbb{Q}}|\Delta^{N}$. Thus, since $|D_{K^{+}/\mathbb{Q}}|$ is independent of $\Delta$, we find  $\mathfrak{q}(\chi,s) \ll \Delta^{N}|s|^{2N}$. Since $L(s,\chi)$ is entire, we therefore obtain by Theorem \ref{thm: IK} the estimates
\begin{eqnarray}\label{eq:estimates}
-\frac{\zeta_{K^+}'}{\zeta_{K^+}}(1+it) & \ =\ & \frac{1}{s-1} + O(\log\log(\Delta|t|^{2N})) \nonumber\\
-\frac{L'}{L}(1+it,\chi) &\ \ll \ & \log\log(\Delta^{N}|t|^{2N}).
\end{eqnarray}
Combining these estimates with the fact that
\begin{equation}
\frac{\zeta_{\text{ram}}'}{\zeta_{\text{ram}}}(1+it)\ \ll\ \log\log\Delta
\end{equation}
(use $\displaystyle\sum_{p\,|\,\Delta}\frac{\log p}{p}\ \ll\ \log\log\Delta$), one finds since $\phi$ is Schwartz that
\begin{equation}
\int_{-\infty}^{\infty}\phi(x)\frac{U'}{U}\left(1 + \frac{4\pi ix}{\log\Delta}\right)dx\ \ll\ \log\log\Delta,
\end{equation}
where the implied constant depends only on $\phi$ and $N$. Combined with the previous lemma, this proves

\begin{lem}\label{lem: S_2}
We have
\be
S_{2,1}(\Delta,\phi)_2\ =\  -\frac{1}{2}\phi(0)\log\Delta + O(\log\log\Delta).
\ee
Thus, by \eqref{eq:S_2}, we have
\be
S_{2}(\Delta,\phi) = -\frac{1}{2}\phi(0)\log\Delta + O(\log\log\Delta)
\ee
as well.
\end{lem}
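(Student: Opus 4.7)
The plan is to bootstrap off the previous lemma, which already identifies the main $-\frac{1}{2}\phi(0)\log\Delta$ term and reduces everything to controlling the integral
\[
\mathcal{I}(\Delta,\phi)\ :=\ \int_{-\infty}^{\infty}\phi(x)\frac{U'}{U}\left(1+\frac{4\pi i x}{\log\Delta}\right)dx.
\]
So the task is purely to show $\mathcal{I}(\Delta,\phi) \ll \log\log\Delta$, after which the claimed bound on $S_{2,1}(\Delta,\phi)_2$ is immediate, and the statement for $S_{2}(\Delta,\phi)$ follows by applying \eqref{eq:S_2}.

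To estimate $\mathcal{I}(\Delta,\phi)$, the first move is to insert the factorization \eqref{eq:U},
\[
\frac{U'}{U}(s)\ =\ \frac{1}{s-1} + \frac{\zeta_{K^+}'}{\zeta_{K^+}}(s) - \frac{L'}{L}(s,\chi) - \frac{\zeta_{\text{ram}}'}{\zeta_{\text{ram}}}(s).
\]
The key observation is that the $1/(s-1)$ term cancels the simple pole of $\zeta_{K^+}'/\zeta_{K^+}$ at $s=1$: by the first line of \eqref{eq:estimates}, the combination $\frac{1}{s-1}+\frac{\zeta_{K^+}'}{\zeta_{K^+}}(1+it)$ is of size $\log\log(\Delta|t|^{2N})$. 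The $L$-function contribution is handled directly by the second line of \eqref{eq:estimates}, since $L(s,\chi)$ is entire so $r=0$ in Theorem \ref{thm: IK}. The ramified factor is bounded by the familiar $\sum_{p\mid\Delta}\tfrac{\log p}{p}\ll\log\log\Delta$ estimate quoted just before the statement. Putting these together, for $s = 1 + 4\pi i x/\log\Delta$ we obtain
\[
\frac{U'}{U}\!\left(1+\frac{4\pi i x}{\log\Delta}\right)\ \ll\ \log\log\bigl(\Delta(1+|x|)^{2N}\bigr),
\]
with implied constant depending only on $N$.

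Finally, since $\phi$ is Schwartz and the bound $\log\log(\Delta(1+|x|)^{2N}) = \log\log\Delta + O(\log(1+|x|))$ for $|x|$ moderate (and is anyway dominated by any polynomial weight coming from $\phi$), integrating against $\phi$ gives
\[
\mathcal{I}(\Delta,\phi)\ \ll\ \log\log\Delta\cdot\!\int_{-\infty}^{\infty}|\phi(x)|\,dx\ +\ \int_{-\infty}^{\infty}|\phi(x)|\log(1+|x|)\,dx\ \ll\ \log\log\Delta,
\]
with implied constant depending on $\phi$ and $N$. Substituting this back into the preceding lemma yields $S_{2,1}(\Delta,\phi)_2 = -\frac{1}{2}\phi(0)\log\Delta + O(\log\log\Delta)$, and then \eqref{eq:S_2} gives the same expansion for $S_{2}(\Delta,\phi)$.

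The genuinely nontrivial input is not in this lemma itself but has already been done: the delicate cancellation of the pole of $\zeta_{K^+}/\zeta_{K^+}$ against the explicit $1/(s-1)$ in \eqref{eq:U}, and the GRH-dependent bound of Theorem \ref{thm: IK} that gives uniform $\log\log$ control on the logarithmic derivatives along $\Re s = 1$. Once those are in hand, the only mild technical point is making sure the polynomial growth of $\log\log(\Delta|t|^{2N})$ in $t$ is absorbed by the Schwartz decay of $\phi$, which is automatic.
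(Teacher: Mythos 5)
Your proposal is correct and follows essentially the same route as the paper: take the preceding lemma's reduction to the integral of $\phi$ against $\frac{U'}{U}\left(1+\frac{4\pi i x}{\log\Delta}\right)$, split $\frac{U'}{U}$ via \eqref{eq:U}, invoke the GRH estimates of Theorem \ref{thm: IK} (giving \eqref{eq:estimates}) together with the $\sum_{p\mid\Delta}\frac{\log p}{p}\ll\log\log\Delta$ bound for the ramified factor, and absorb the mild $t$-dependence using the Schwartz decay of $\phi$. Nothing further is needed.
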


We are now ready to prove the main theorem.

\subsection{Proof of Theorem \ref{thm:main}}

Our main result trivially follows from our analysis of $S_1$ and $S_2$.

\begin{proof}[Proof of Theorem \ref{thm:main}] By \eqref{eq:density}, we have
\begin{eqnarray}
 D_{\widehat{\mathcal{CL}(K)}}(\phi)& \ = \ & \frac{1}{\log \Delta}
\Bigg[ \frac{4}{h_{\Delta}}\int_{0}^{\infty}{\widehat{\phi}\left(\frac{x}{\log \Delta}\right)\cosh\left(\frac{x}{2}\right)dx}\nonumber\\
& & \ +\  \widehat{\phi}(0)\cdot\Big(\log \Delta - 2N\gamma_{{\rm EM}} -  2N\log 8\pi\Big)
+ S_{1}(\Delta, \phi) + S_{2}(\Delta, \phi)\nonumber\\ & & \  + 2N\int_{0}^{\infty}{\frac{\widehat{\phi}(0) - \widehat{\phi}(x)}{2\sinh(x/2)}dx} \Bigg].
\end{eqnarray}
By Lemmas \ref{lem:htrigfns}, \ref{lem: S_1} and \ref{lem: S_2}, and since $N$ is fixed and $r_{1} \leq N$, this entire expression equals
\begin{equation}
\frac{1}{\log\Delta}\left[\widehat{\phi}(0)\log\Delta - \frac{1}{2}\phi(0)\log\Delta + O(\log\log\Delta)\right],
\end{equation}
which completes the proof.
\end{proof}

\section{Lower Order Terms}\label{sec:3}

In this section, we prove Theorem \ref{thm:lowterm}, which gives the lower order terms for a sub-family of our original family. Similar to investigations of the 1-level density in other families (such as \cite{Gao,Mil4}), we are able to isolate lower order terms if we restrict to a sub-family which simplifies some of the terms. To derive the lower order terms of the 1-level density, we make the additional assumption that the class number of $K_0$ in the narrow sense is 1 (recall that the {\em narrow class group} of $K_0$ is defined similarly to the ordinary ideal class group, except that ideals are considered equivalent if and only if they differ by a totally positive element of $K_0$ rather than an arbitrary one). We will make use of the following facts, which rephrase Theorems 1 and 2 of \cite{H}.

\begin{prop}\label{prop:infinitelymany}
The family $\{K_{\Delta}\}$ of CM-fields for which $K^{+} = K_{0}$ contains infinitely many fields of odd class number (in the usual sense).
\end{prop}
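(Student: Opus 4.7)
The proposition is explicitly identified as a rephrasing of Theorems 1 and 2 of \cite{H}, so my plan is to reduce the statement to those results rather than reproducing a full genus-theoretic argument. The first step is to match hypotheses: I would verify that the assumption ``$K_0$ has narrow class number $1$'' is precisely (or is implied by) the hypothesis on the totally real base field appearing in \cite{H}, and that ``CM-field over $K_0$ with odd class number'' is the conclusion of those theorems in our notation. If the translation is exact, this step is essentially a citation; if it is slightly weaker, a short verification using the exact sequence relating the narrow and ordinary class groups of $K_0$ (involving signs of units) suffices.

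The underlying mechanism, which I would sketch for exposition, is genus theory for the quadratic CM extension $K/K_0$. Writing $K = K_0(\sqrt{\beta})$ with $\beta \in \mathcal{O}_{K_0}$ totally negative, classical genus theory expresses the $2$-rank of $\mathrm{Cl}(K)$ in terms of the number $t$ of finite primes of $K_0$ ramifying in $K/K_0$, with a correction coming from the unit/signature structure of $K_0$ and from primes above $2$. The narrow class number $1$ hypothesis collapses the correction coming from signatures (every totally positive unit is a square, and every ideal of $\mathcal{O}_{K_0}$ has a totally positive generator), so the $2$-rank is essentially $t-1$ up to dyadic contributions. To obtain $h_K$ odd one then chooses $\beta$ so that (i) $(\beta)$ is a prime ideal of $\mathcal{O}_{K_0}$, giving $t = 1$ at the non-dyadic level, and (ii) suitable local conditions at primes above $2$ are satisfied so that the dyadic correction also vanishes. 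Producing infinitely many such $\beta$ comes from Chebotarev's density theorem applied to $K_0$: since every ideal class is trivial (even in the narrow sense), there are infinitely many principal prime ideals of $K_0$ admitting totally negative generators satisfying any prescribed finite set of congruence conditions, and distinct $\beta$ yield distinct CM-fields $K$.

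The main obstacle is the dyadic contribution to the $2$-rank, which is the subtle point handled carefully in \cite{H}. The narrow-class-number-$1$ assumption does not by itself kill this contribution; one must impose explicit local conditions on $\beta$ modulo a fixed power of the primes above $2$ of $K_0$ so that $K/K_0$ is unramified (or tamely enough ramified) at those primes. Because these are only finitely many local conditions and the supply of admissible totally negative prime generators is infinite, the construction can be carried out infinitely often. Invoking Theorems 1 and 2 of \cite{H} packages this entire argument and yields the stated conclusion.
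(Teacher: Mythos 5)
Your proposal matches the paper's treatment: the paper offers no independent argument for this proposition, stating only that it (together with Proposition \ref{prop:oddclassnumber}) rephrases Theorems 1 and 2 of \cite{H}, which is exactly the reduction-by-citation you propose. Your additional genus-theoretic sketch is harmless expository material not present in (or needed by) the paper, since the content is delegated entirely to \cite{H}.
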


Thus we may consider $\{K_{\Delta} : 2\nmid h_{\Delta}\}$ as a sub-family of $\{K_{\Delta}\}$.\\ \ \\

\texttt{\emph{Unless otherwise stated, $K = K_{\Delta}$ denotes a CM-field of odd class number such that $K^{+} = K_{0}$.}}\\ \ \\

\begin{prop}\label{prop:oddclassnumber}
Let $K$ be a CM-field such that $K^{+}$ has class number 1, and suppose that the class number of $K$ is odd. Then at most one finite prime of $K^{+}$ ramifies in $K$.
\end{prop}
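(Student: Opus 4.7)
The plan is to apply genus theory to the quadratic CM-extension $K/K^+$, specifically the ambiguous class number formula, and use the oddness of $h_K$ to force an upper bound on the number of ramified finite primes. Setting $G = \text{Gal}(K/K^+) = \{1, c\}$ with $c$ complex conjugation, the key point is that the subgroup $\text{Cl}(K)^G$ of $G$-invariant ideal classes must have odd order (since $h_K$ is odd), while its cardinality, as computed from genus theory, grows like a power of $2$ in the number of ramified primes.

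First I would invoke the ambiguous class number formula for the quadratic extension $K/K^+$,
\[
|\text{Cl}(K)^G|\ =\ h_{K^+} \cdot \frac{2^{t-1}}{[\mathcal{O}_{K^+}^\times : \mathcal{O}_{K^+}^\times \cap N_{K/K^+}(K^\times)]},
\]
where $t$ is the total number of primes of $K^+$ (finite or archimedean) ramifying in $K$. The hypothesis $h_{K^+}=1$ kills the numerator factor. Since $K$ is totally imaginary and $K^+$ is totally real of degree $N$, every archimedean place of $K^+$ ramifies in $K$, so $t = N + t_0$ where $t_0$ counts the ramified finite primes.

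Next I would evaluate the unit index using the narrow class number 1 hypothesis on $K^+$, which is equivalent to the surjectivity of the sign map $\mathcal{O}_{K^+}^\times \to \{\pm 1\}^N$, hence $[\mathcal{O}_{K^+}^\times : (\mathcal{O}_{K^+}^\times)^2] = 2^N$. A Hilbert-90 argument identifies $\mathcal{O}_{K^+}^\times \cap N_{K/K^+}(K^\times)$ with $N_{K/K^+}(\mathcal{O}_K^\times)$, and since $\ker(N_{K/K^+}|_{\mathcal{O}_K^\times}) = \mu_K$ (Dirichlet for CM-fields) one shows $N_{K/K^+}(\mathcal{O}_K^\times) \supseteq (\mathcal{O}_{K^+}^\times)^2$ with index equal to the Hasse unit index $Q := [\mathcal{O}_K^\times : \mu_K \mathcal{O}_{K^+}^\times] \in \{1, 2\}$. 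Combining these, $[\mathcal{O}_{K^+}^\times : N_{K/K^+}(\mathcal{O}_K^\times)] = 2^N/Q$, so substituting yields
\[
|\text{Cl}(K)^G|\ =\ Q \cdot 2^{t_0 - 1}.
\]

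Finally, $\text{Cl}(K)^G \subseteq \text{Cl}(K)$ has order dividing $h_K$, so oddness of $h_K$ forces $Q \cdot 2^{t_0-1}$ to be odd, whence $t_0 \leq 1$. The main obstacle will be the bookkeeping in the unit-index calculation: one must verify that $N_{K/K^+}(\epsilon) \notin (\mathcal{O}_{K^+}^\times)^2$ whenever $\epsilon \in \mathcal{O}_K^\times \smallsetminus \mu_K \mathcal{O}_{K^+}^\times$ (so that the index is exactly $2^N/Q$, not something smaller), and to treat the edge case $t_0 = 0$, in which the formula forces $Q = 2$ and $|\text{Cl}(K)^G| = 1$; the apparent non-integrality when $t_0 = 0$ and $Q = 1$ simply reflects the impossibility of an unramified quadratic CM-extension of a narrow class number one totally real field with $Q = 1$.
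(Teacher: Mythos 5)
The paper itself gives no proof of this proposition: it is stated as a rephrasing of Theorems 1 and 2 of Horie [H], so your genus-theoretic argument is a self-contained substitute for a citation, and your strategy --- Chevalley's ambiguous class number formula for the quadratic extension $K/K^{+}$, all $N$ archimedean places counted as ramified, plus the observation that oddness of $h_K$ forces the ambiguous class group $\mathrm{Cl}(K)^{G}$ to be trivial --- is the right one and does yield the statement. Two caveats, though. First, you invoke the narrow class number one hypothesis on $K^{+}$ (surjectivity of the sign map, so that totally positive units are squares), but the proposition only assumes $h_{K^{+}}=1$ in the ordinary sense; in the paper's application $K_0$ does have strict class number one, but as a proof of the proposition as stated you are borrowing a hypothesis you were not given. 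Second, the step identifying $\mathcal{O}_{K^{+}}^{\times}\cap N_{K/K^{+}}(K^{\times})$ with $N_{K/K^{+}}(\mathcal{O}_{K}^{\times})$ is not a consequence of Hilbert 90: a unit that is the norm of an element need not be the norm of a unit, and the quotient of these two groups measures exactly the ambiguous classes that are not strongly ambiguous, so asserting it vanishes before you have shown $\mathrm{Cl}(K)^{G}$ is trivial is close to circular. (Your computation $[N_{K/K^{+}}(\mathcal{O}_{K}^{\times}):(\mathcal{O}_{K^{+}}^{\times})^{2}]=Q$ is fine; it is the passage from norms of units to norms of elements that is unjustified.)

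Fortunately neither refinement is needed, and your conclusion survives the gap: since $(\mathcal{O}_{K^{+}}^{\times})^{2}\subseteq \mathcal{O}_{K^{+}}^{\times}\cap N_{K/K^{+}}(K^{\times})$, the unit index in Chevalley's formula divides $[\mathcal{O}_{K^{+}}^{\times}:(\mathcal{O}_{K^{+}}^{\times})^{2}]=2^{N}$, so $|\mathrm{Cl}(K)^{G}|=2^{N+t_0-1}/2^{a}$ with $a\le N$ is a power of $2$ (any inaccuracy in your exact evaluation only makes this power larger, never smaller than $2^{t_0-1}$). Since $|\mathrm{Cl}(K)^{G}|$ divides the odd number $h_K$, it equals $1$, forcing $N+t_0-1\le N$, i.e.\ $t_0\le 1$. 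Replacing your exact evaluation by this divisibility bound both closes the Hilbert-90 gap and removes the unneeded narrow-class-number hypothesis, giving a clean proof under exactly the stated assumptions.
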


Writing $K = K^{+}(\sqrt{\beta})$, this implies that the relative discriminant $D(K/K^{+})$ is divisible by at most one prime of $\mathcal{O}_{K^{+}}$, which we denote $\mathfrak{q}_{K/K^{+}} = \mathfrak{q}$. Since the CM-fields $K$ for which $\mathcal{O}_{K} = \mathcal{O}_{K^+}[\sqrt{\beta}]$ have discriminant $(4\beta)$, which is divisible by more than one prime, any $K$ as in the proposition must have ring of integers $\mathcal{O}_{K} = \mathcal{O}_{K^+}\left[\displaystyle\frac{1 + \sqrt{\beta}}{2}\right]$ and relative discriminant $D_{K/K^{+}} = (\beta)$. Since $\beta$ is square-free and $h_{K^+}=1$, the proposition then implies that $D_{K/K^{+}}$ is prime. Arguing as in the end of the proof of Lemma \ref{lem: key}, we moreover have
\begin{equation}
N^{K^{+}}_{\mathbb{Q}}(D_{K/K^{+}})\ =\ |N^{K^{+}}_{\mathbb{Q}}(\beta)|\ =\ \frac{\Delta}{D^{2}_{K^{+}/\mathbb{Q}}}.
\end{equation}
Thus the contribution from the ramified prime of $\mathcal{O}_{K^+}$ to terms like $\displaystyle\frac{\log N\mathfrak{q}}{N\mathfrak{q}^{1/2}}$ is $\displaystyle O\left(\frac{\log\Delta}{\Delta^{1/2}}\right)$, where the implied constant depends only on $K^{+} = K_{0}$. Since we're only interested in terms of size $\frac{1}{\log\Delta}$, we may therefore ignore the ramified prime in what follows. \\

\subsection{Evaluation of $S_{1}$ (Redux)}

With all notation as before, we again consider $S_{1}(\Delta, \phi)$. Our goal is to improve the calculation to terms of size $1/\log\Delta$. Recall (cf. \eqref{eq:s1deltaphiprincnonprinc}) that
\begin{equation}
S_{1}(\Delta, \phi)\ =\ -2\sum_{\substack{\mathfrak{p}\ {\rm non-principal} \\ \mathfrak{p}^2\ {\rm principal}}}\frac{\log N\mathfrak{p}}{N\mathfrak{p}}\widehat{\phi}\left(2\frac{\log N\mathfrak{p}}{\log \Delta}\right) + O(1).
\ee
\emph{Since now the class number of $K$ is odd}, no non-principal prime has principal square, so in fact
\be
S_{1}(\Delta, \phi)\ =\ -2\sum_{\mathfrak{p}\ {\rm non-principal}}\log N\mathfrak{p}\sum_{\substack{m \geq 3 \\ \mathfrak{p}^{m} \ {\rm principal}}}\frac{\widehat{\phi}\left( m \frac{\log N\mathfrak{p}}{\log \Delta}\right)}{N\mathfrak{p}^{m/2}}.
\ee

Observe that if $\mathfrak{p}$ is non-principal, then $f_{K/K^{+}}(\mathfrak{p}) = 1$, since otherwise $\mathfrak{p}$ lies over an inert prime of $K^{+}$ and so must be principal since $h_{K^+}=1$. Let $m > 1$ be an integer such that $\mathfrak{p}^m$ is principal. Let $\mathfrak{p}^{m} = \alpha\mathcal{O}_{K}$, and suppose $\alpha \in \mathcal{O}_{K^{+}}$. Then $N^{K}_{K^+}(\mathfrak{p}^{m}) = (\alpha^{2})$. Since $f_{K/K^{+}}(\mathfrak{p}) = 1$, the ideal $\mathfrak{q} = N^{K}_{K^+}(\mathfrak{p})$ of $\mathcal{O}_{K^+}$ is prime, so unique factorization into primes implies that $m$ must be even. Consequently, since the fact that $h_K$ is odd implies that the order $d$ of $\mathfrak{p}$ in $\mathcal{CL}(K)$ must be odd as well, we must have $\alpha \in \mathcal{O}_{K}\backslash\mathcal{O}_{K^+}$ if $\mathfrak{p}^{d} = (\alpha)$. Hence, we may write $\alpha = x + y\sqrt{\beta}$, where $x,y \in \mathcal{O}_{K^+}$ and $y \neq 0$. Thus
\be
N^{K}_{\mathbb{Q}}(\mathfrak{p}^{d})\ =\ |N^{K}_{\mathbb{Q}}(\alpha)|,
\ee
so the proof of Lemma \ref{lem: key} implies that
\begin{equation}\label{eq:lowerbound}
N^{K}_{\mathbb{Q}}(\mathfrak{p})\ \geq\ (C\Delta)^{1/d},
\end{equation}
where $C$ depends only on $K^+ = K_0$.\\
\indent Since $\mathfrak{p}^m$ is principal if and only if $d|m$, we have (writing $d = d_{\mathfrak{p}}$ to specify the prime),
\begin{eqnarray}
S_{1}(\Delta, \phi) &\ =\ -2\displaystyle\sum_{\mathfrak{p}\ {\rm non-principal}}\log N\mathfrak{p}\sum_{k=1}^{\infty}\frac{\widehat{\phi}\left( d_{\mathfrak{p}}k \frac{\log N\mathfrak{p}}{\log \Delta}\right)}{N\mathfrak{p}^{d_{\mathfrak{p}}k/2}} \nonumber\\
&\ =\ -2\displaystyle\sum_{\substack{\mathfrak{p}\ {\rm non-principal} \\ \log N\mathfrak{p} < \frac{\sigma\log\Delta}{d_{\mathfrak{p}}}}}\log N\mathfrak{p}\sum_{k=1}^{\infty}\frac{\widehat{\phi}\left( d_{\mathfrak{p}}k \frac{\log N\mathfrak{p}}{\log \Delta}\right)}{N\mathfrak{p}^{d_{\mathfrak{p}}k/2}} \nonumber\\
\end{eqnarray}
so \eqref{eq:lowerbound} and the fact that $\sigma < 1$ imply that $S_{1}(\Delta, \phi) = 0$ for sufficiently large $\Delta$ because the sum is vacuous.

\subsection{Evaluation of $S_{2}$ (Redux)}

We have \\
\be
S_{2}(\Delta,\phi)\  =\  -2\displaystyle\sum_{\mathfrak{p} \ {\rm principal}}\log N\mathfrak{p}\sum_{m = 1}^{\infty} {\frac{\widehat{\phi}\left(m \frac{\log N\mathfrak{p}}{\log \Delta}\right)}{N\mathfrak{p}^{m/2}}}.
\ee
As argued above, the contribution from the ramified prime is negligible, while the contribution from the primes of degree 1 over $K_0$ is ultimately zero. Consequently, for $\Delta$ large enough, we have (up to the $O\left(\frac{\log\Delta}{\Delta^{1/2}}\right)$ error from the ramified prime)
\begin{eqnarray}\label{eq:s2deltaphiexpansion}
S_{2}(\Delta,\phi) &\ =\ &  -2\displaystyle\sum_{\substack{\mathfrak{p} \ {\rm principal} \\ f_{K/K_{0}}(\mathfrak{p})=2}}\log N\mathfrak{p}\sum_{m = 1}^{\infty} {\frac{\widehat{\phi}\left(m \frac{\log N\mathfrak{p}}{\log \Delta}\right)}{N\mathfrak{p}^{m/2}}} \nonumber\\
& = & -4\displaystyle\sum_{\substack{\mathfrak{q} \subset \mathcal{O}_{K_0} \\ {\rm inert \ in} \ K}}\log N\mathfrak{q}\sum_{m = 1}^{\infty} {\frac{\widehat{\phi}\left(2m \frac{\log N\mathfrak{q}}{\log \Delta}\right)}{N\mathfrak{q}^m}}.
\end{eqnarray}
Recall from Section \ref{sec:S_2} that
\bea & &
\displaystyle\int_{-\infty}^{\infty}\phi(x)\frac{U'}{U}\left(1 + \frac{4\pi ix}{\log\Delta}\right)dx\nonumber\\ & = \ & \frac{1}{4}\phi(0)\log\Delta - 2\sum_{\substack{\mathfrak{q}\subset\mathcal{O}_{K_0} \\ {\rm inert \ in} \ K}}\sum_{\substack{m\geq 1 \\ {\rm odd}}}\frac{\log N\mathfrak{q}}{N\mathfrak{q}^{m}}\widehat{\phi}\left(2m\frac{\log N\mathfrak{q}}{\log\Delta}\right).
\eea
Thus, using Lemma \ref{lem: key} and the fact that contribution from the ramified prime is negligible, we have by the compact support of $\widehat{\phi}$
\begin{align}
S_{2}(\Delta,\phi) &\ =\ -4\displaystyle\sum_{\substack{\mathfrak{q} \subset \mathcal{O}_{K_0} \\ {\rm inert \ in} \ K}}\log N\mathfrak{q}\sum_{m = 1}^{\infty} {\frac{\widehat{\phi}\left(2m \frac{\log N\mathfrak{q}}{\log \Delta}\right)}{N\mathfrak{q}^m}} \nonumber\\
&\ =\ -\frac{1}{2}\phi(0)\log\Delta + 2\displaystyle\int_{-\infty}^{\infty}\phi(x)\frac{U'}{U}\left(1 + \frac{4\pi ix}{\log\Delta}\right)dx \nonumber\\
& \hspace{0.5in} -\ 4\displaystyle\sum_{\substack{\mathfrak{q} \subset \mathcal{O}_{K_0} \\ {\rm inert \ in} \ K}}\log N\mathfrak{q}\sum_{\substack{m \geq 2 \\ {\rm even}}}{\frac{\widehat{\phi}\left(2m \frac{\log N\mathfrak{q}}{\log \Delta}\right)}{N\mathfrak{q}^m}}.
\end{align}
Therefore, to complete the analysis of the lower-order terms, we must show that
\be
2\displaystyle\int_{-\infty}^{\infty}\phi(x)\frac{U'}{U}\left(1 + \frac{4\pi ix}{\log\Delta}\right)dx - 4\displaystyle\sum_{\substack{\mathfrak{q} \subset \mathcal{O}_{K_0} \\ {\rm inert \ in} \ K}}\log N\mathfrak{q}\sum_{\substack{m \geq 2 \\ {\rm even}}}{\frac{\widehat{\phi}\left(2m \frac{\log N\mathfrak{q}}{\log \Delta}\right)}{N\mathfrak{q}^m}}
\ee equals $c_K+o(1)$, with $c_K$ bounded independently of $K$. Note that in the explicit formula the terms $S_1(\Delta,\phi)$ and $S_2(\Delta,\phi)$ are multiplied by $1/\log \Delta$; thus if we show the term above is $c_K+o(1)$, we will have isolated its contribution to the first lower order term.

First, note that since the compact support of $\widehat{\phi}$ restricts the sums to be finite, we have using Taylor series
\begin{align}
\displaystyle\sum_{\substack{\mathfrak{q} \subset \mathcal{O}_{K_0} \\ {\rm inert \ in} \ K}}\log N\mathfrak{q}\sum_{\substack{m \geq 2 \\ {\rm even}}}{\frac{\widehat{\phi}\left(2m \frac{\log N\mathfrak{q}}{\log \Delta}\right)}{N\mathfrak{q}^m}}
&\ =\ \widehat{\phi}(0)\displaystyle\sum_{\substack{\mathfrak{q} \subset \mathcal{O}_{K_0} \\ {\rm inert \ in} \ K}}\log N\mathfrak{q}\sum_{\substack{m \geq 2 \\ {\rm even}}}\frac{1}{N\mathfrak{q}^m} + O\left(\frac{1}{\log\Delta}\right) \nonumber\\
&\ =\ \widehat{\phi}(0)\displaystyle\sum_{\substack{\mathfrak{q} \subset \mathcal{O}_{K_0} \\ {\rm inert \ in} \ K}}\frac{\log N\mathfrak{q}}{N\mathfrak{q}^{2}-1} + O\left(\frac{1}{\log\Delta}\right) \nonumber\\
\end{align}
and since each prime of $\mathcal{O}_{K_0}$ lies over at most $N$ rational primes, this is dominated by a convergent $p$-series independent of $K$, and thus is $O(1)$. \\
\indent To analyze the integral of $\phi$ against the logarithmic derivative of $U(s)$, let $\beta_{k}(\Delta)$ denote the $k$-th coefficient in the power series expansion of the logarithmic derivative of $U(s)$ about $s=1$; thus
\be
\frac{U'}{U}\left(1 + \frac{4\pi ix}{\log\Delta}\right)\ =\ \frac{\log\Delta}{4\pi ix} + \sum_{k=0}^{\infty}\beta_{k}(\Delta)\left(\frac{4\pi ix}{\log\Delta}\right)^{k}.
\ee
To get rid of the term $\log\Delta/4\pi ix$, observe that $\Im \frac{U'}{U}\left(1 + 4\pi ix/\log\Delta\right)$ is an odd function of $x$, so that
\be
\int_{-\infty}^{\infty}\phi(x)\frac{U'}{U}\left(1 + \frac{4\pi ix}{\log\Delta}\right)dx\ =\ \int_{-\infty}^{\infty}\phi(x)\Re\frac{U'}{U}\left(1 + \frac{4\pi ix}{\log\Delta}\right)dx
\ee
and
\be
\Re\frac{U'}{U}\left(1 + \frac{4\pi ix}{\log\Delta}\right)\ =\ \sum_{k=0}^{\infty}\beta_{2k}(\Delta)\left(\frac{4\pi ix}{\log\Delta}\right)^{2k}.
\ee
Recall from \ref{sec:S_2} that
\be
U(s)\ =\ (s-1)\frac{\zeta_{K_0}(s)}{L(s,\chi)\zeta_{\text{ram}}(s)}
\ee
and that $U(s)$ is analytic and non-zero at $s=1$. A straightforward computation, using the fact that $L(s,\chi) = \zeta_{K}(s)/\zeta_{K_0}(s)$, then yields
\begin{equation}\label{eq:beta_0}
\beta_{0}(\Delta) = \frac{U'}{U}(1)\ =\ 2\frac{\gamma_{K_0}}{\rho_{K_0}} - \frac{\gamma_K}{\rho_K} + O\left(\frac{1}{\log\Delta}\right)
\end{equation}
where for a number field $E/\mathbb{Q}$, $\rho_E$ is the residue of its Dedekind zeta function at the simple pole $s=1$
\be
\rho_{E}\ =\ {\rm res}_{s=1}\zeta_{E}(s)\ =\ \frac{2^{r_1}(2\pi)^{r_2}h_{E}R_{E}}{w_{E}\sqrt{|D_{E/\mathbb{Q}}|}}
\ee
and $\gamma_E$ denotes its Euler constant
\be
\gamma_E\ =\ \frac{d}{ds}[(s-1)\zeta_{E}(s)]_{s=1}\ =\ \lim_{s\to1}\left(\zeta_{E}(s) - \frac{\rho_E}{s-1}\right).
\ee
The $O(1/\log\Delta)$ term in \eqref{eq:beta_0} comes from $\zeta_{{\rm ram}}(s)$. We claim that $\beta_{0}(\Delta) = O(1)$ as $\Delta\to\infty$, with the implied constant depending only on $K_0$.\\
\indent We use the following bound for the number field Euler constant, which is Theorem 7 of \cite{MO}. Let $E$ be a number field of degree $n$ over $\mathbb{Q}$, with $r_1$ real and $2r_2$ complex embeddings. Denote the embeddings $K \hookrightarrow K^{(i)}$, and arrange them in such a way that $K \hookrightarrow K^{(i)}$ is real for $1 \leq i \leq r_1$, imaginary for $r_{1} + 1 \leq i \leq r_{1} + r_{2}$, and $\overline{K^{(i + r_{2})}} = K^{(i)}$. Let $\epsilon_{1},...,\epsilon_{r}$ be an independent set of generators for the unit group of $\mathcal{O}_{E}$ modulo roots of unity, where $r = r_{1} + r_{2} - 1$. Let $M$ be the largest of the values $|\log|\epsilon^{(i)}_{j}||$ for $1 \leq i,j \leq r$. Also, choose an integral basis $\beta_{1},...,\beta_{n}$ for $\mathcal{O}_{E}$ over $\mathbb{Q}$, and let $(\gamma_{ij})$ be the inverse of the non-singular matrix $(\beta_{j}^{(i)})$. Finally, set $\gamma = \max_{i,j}|\gamma_{ij}|$. Then we have
\begin{prop}\label{prop:eulerbound}
\be
|\gamma_E|\ \leq\ \rho_{E}(1 + n2^{n}\max(1,\Phi_{0}^{n}))
\ee
where $\Phi_{0} = 2^{n-1}n^{2n}\gamma^{n-1}e^{rM(n-1)}$.
\end{prop}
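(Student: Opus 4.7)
The plan is to bound $|\gamma_E|$ by converting the defining limit into a convergent integral and then controlling the integrand by a classical geometry-of-numbers argument for counting ideals with bounded norm, keeping all constants explicit.

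First, I would use Abel summation on the partial-sum representation
\[
\gamma_E \ =\ \lim_{x\to\infty}\left(\sum_{N\mathfrak{a}\le x}\frac{1}{N\mathfrak{a}} - \rho_E \log x\right)
\]
to obtain the identity
\[
\gamma_E \ =\ \rho_E + \int_{1}^{\infty}\frac{R(t)}{t^{2}}\,dt, \qquad R(t) \ :=\ a_E(t)-\rho_E t,
\]
where $a_E(t) = \#\{\mathfrak{a}\subset\mathcal{O}_E : N\mathfrak{a}\le t\}$. Thus the task reduces to giving an explicit pointwise bound on $R(t)$ in terms of the data $n$, $r$, $M$, $\gamma$.

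Second, I would realize $R(t)$ geometrically. After summing over ideal classes and using the standard bijection between principal ideals in a fixed class of norm $\le t$ and lattice points of $\mathcal{O}_E$ (embedded in $\mathbb{R}^n$ via the $n$ archimedean embeddings $K\hookrightarrow K^{(i)}$) lying in a scaled fundamental domain $t^{1/n}\mathcal{D}$, one has $R(t)$ equal to a sum of lattice-point discrepancies. Here $\mathcal{D}$ is a fundamental domain for the action of the units on the norm-one slice, which can be taken to be explicitly bounded by a box whose side-lengths are controlled by $e^{M}$ in each unit-log direction (hence by $e^{rM(n-1)}$ in total), while passing from the integral basis $\beta_1,\dots,\beta_n$ to the embedding coordinates introduces the Jacobian $\gamma^{n-1}$ (via the matrix $(\gamma_{ij})$). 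A Lipschitz-boundary count then yields $|R(t)|\ll \rho_E \cdot 2^n\max(1,\Phi_0^n)\cdot t^{1-1/n}$, where the normalization by $\rho_E$ appears naturally because $\rho_E$ itself is a ratio of the volume of $\mathcal{D}$ to the covolume of $\mathcal{O}_E$, and the residual combinatorial factor $n^{2n}$ comes from iteratively decomposing $\partial(t^{1/n}\mathcal{D})$ into faces.

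Third, I would assemble the pieces. Since $\int_1^\infty t^{-1-1/n}\,dt = n$, inserting the pointwise bound into the integral identity and applying the triangle inequality gives
\[
|\gamma_E| \ \le\ \rho_E + \rho_E \cdot n \cdot 2^n\max(1,\Phi_0^n) \ =\ \rho_E\bigl(1 + n\, 2^{n}\max(1,\Phi_0^{n})\bigr),
\]
which is the claimed bound. The main obstacle is Step 2: one must construct the fundamental domain $\mathcal{D}$ carefully, so that its boundary decomposes into pieces whose Lipschitz constants can be written as products of the elementary quantities $\gamma$, $e^{M}$, and powers of $n$, and then keep track of all these constants through the change of variables from integral-basis coordinates to Minkowski coordinates. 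Every other step (Abel summation, the volume-residue identification, and the final integration) is standard but must be executed with some care to ensure no implied constants are absorbed into $\ll$-notation.
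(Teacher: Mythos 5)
First, a point of comparison: the paper does not prove this proposition at all. It is quoted as Theorem 7 of \cite{MO}, and the paper's own contribution in this direction (the appendix) is only the statement that the quantities $\gamma$ and $M$ entering $\Phi_0$ can be bounded independently of $\Delta$. So your attempt has to be measured against the argument of \cite{MO}, which is indeed of the type you outline: an explicit error term in the ideal-counting function combined with partial summation. Your Steps 1 and 3 are correct and essentially complete: writing $A(t)=\#\{\mathfrak{a}\subset\mathcal{O}_E : N\mathfrak{a}\le t\}=\rho_E t+R(t)$, one has for $\Re s>1$ the identity $\zeta_E(s)-\rho_E/(s-1)=\rho_E+s\int_1^\infty R(t)\,t^{-s-1}dt$, and letting $s\to1$ (dominated convergence, which already requires $R(t)=O(t^{1-1/n})$) gives exactly your formula $\gamma_E=\rho_E+\int_1^\infty R(t)\,t^{-2}dt$; with $\int_1^\infty t^{-1-1/n}dt=n$ the final assembly is immediate. (If you keep the Abel-summation phrasing, note that discarding the boundary term $R(x)/x$ and identifying the limit of $\sum_{N\mathfrak{a}\le x}N\mathfrak{a}^{-1}-\rho_E\log x$ with the Laurent coefficient also uses that error bound.)

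The genuine gap is Step 2, and it is not a minor one, because the proposition's entire content is the explicit constant. The inequality you need, $|R(t)|\le \rho_E\,2^{n}\max(1,\Phi_0^{n})\,t^{1-1/n}$ for all $t\ge1$ with $\Phi_0=2^{n-1}n^{2n}\gamma^{n-1}e^{rM(n-1)}$ exactly, is stated by you with a $\ll$ and a qualitative account of where each factor ``should'' come from ($e^{rM(n-1)}$ from the unit fundamental domain, $\gamma^{n-1}$ from the matrix $(\gamma_{ij})$, $n^{2n}$ from decomposing the boundary); none of this is carried out, and the intermediate bound itself is reverse-engineered from the answer rather than derived. In particular, the proportionality of the error constant to $\rho_E$ (equivalently, per ideal class, to $\rho_E/h_E$) is a substantive claim: in the classical count the error is governed by the boundary of a unit-cone fundamental domain measured against a lattice basis of an ideal in the class, and showing that this can be bounded by the stated multiple of the per-class density is precisely the delicate bookkeeping. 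As written, your argument yields only a bound of the shape $|\gamma_E|\le\rho_E(1+nC_E)$ with an unspecified constant $C_E$, not the quoted inequality; closing the gap requires constructing the fundamental domain, changing coordinates via the integral basis and $(\gamma_{ij})$, and executing the Lipschitz boundary count with every constant explicit --- which is exactly the work done in \cite{MO}.
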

In our setting (e.g. CM-fields of odd class number over a fixed totally real field of strict class number 1), the values $\gamma$ and $M$, which a priori depend on $K = K_{\Delta}$, can in fact be made independent of $\Delta$ (see Appendix \ref{sec:appendix} for justification). Combining this fact with the above proposition and \eqref{eq:beta_0}, as well as the fact that $n = [K:\mathbb{Q}] = 2N$ is fixed, we find
\begin{eqnarray}
\beta_{0}(\Delta) & = & 2\frac{\gamma_{K_0}}{\rho_{K_0}} - \frac{\gamma_K}{\rho_K} + O\left(\frac{1}{\log\Delta}\right) \nonumber\\
& \ll & 2\frac{\gamma_{K_0}}{\rho_{K_0}} + \frac{\rho_{K}(1 + 2N2^{2N}\max(1,\Phi_{0}^{2N}))}{\rho_K} + O\left(\frac{1}{\log\Delta}\right) \nonumber\\
& = &  2\frac{\gamma_{K_0}}{\rho_{K_0}} + 1 + 2N2^{2N}\max(1,\Phi_{0}^{2N}) + O\left(\frac{1}{\log\Delta}\right) \nonumber\\
& = & O(1)
\end{eqnarray}
with the implied constant depending only on $K_0$. \\
\indent Now,
\begin{eqnarray}
\int_{-\infty}^{\infty}\phi(x)\frac{U'}{U}\left(1 + \frac{4\pi ix}{\log\Delta}\right)dx & = & \int_{-\infty}^{\infty}\phi(x)\Re\frac{U'}{U}\left(1 + \frac{4\pi ix}{\log\Delta}\right)dx \nonumber\\
& = & \int_{-\infty}^{\infty}\phi(x)\sum_{k=0}^{\infty}\beta_{2k}(\Delta)\left(\frac{4\pi ix}{\log\Delta}\right)^{2k}dx \nonumber\\
& = & \widehat{\phi}(0)\beta_{0}(\Delta) + \int_{-\infty}^{\infty}\phi(x)\sum_{k=1}^{\infty}\beta_{2k}(\Delta)\left(\frac{4\pi ix}{\log\Delta}\right)^{2k}dx. \nonumber\\
\label{eq:int}
\end{eqnarray}
To estimate the integral, observe that
\be
\beta_{k}(\Delta)\ =\ \gamma_{k} - \gamma_{k}(\Delta) + O\left(\frac{1}{\log\Delta}\right)
\ee
where $\gamma_{k}$ and $\gamma_{k}(\Delta)$ are the coefficients in the power series expansion of the logarithmic derivative of $\zeta_{K_0}(s)$ and $L(s,\chi)$, respectively, about $s=1$. The Riemann hypothesis for $L(s,\chi)$ implies
\be
\gamma_{k}(\Delta)\ \ll\ (\log\log\Delta)^{k+1}
\ee
and therefore
\be
\beta_{k}(\Delta)\ \ll\ (\log\log\Delta)^{k+1}
\ee
with the implied constant depending on $k$ and $K_0$. Hence, from \eqref{eq:int}, we obtain
\begin{eqnarray}
\int_{-\infty}^{\infty}\phi(x)\frac{U'}{U}\left(1 + \frac{4\pi ix}{\log\Delta}\right)dx & = &
\widehat{\phi}(0)\beta_{0}(\Delta) + \int_{-\infty}^{\infty}\phi(x)\sum_{k=1}^{\infty}\beta_{2k}(\Delta)\left(\frac{4\pi ix}{\log\Delta}\right)^{2k}dx
\nonumber\\
& = & \widehat{\phi}(0)\beta_{0}(\Delta) + \sum_{k=1}^{\infty}\widehat{\phi}^{(2k)}(0)\beta_{2k}(\Delta)\left(\frac{4\pi i}{\log\Delta}\right)^{2k} \nonumber\\
& = & \widehat{\phi}(0)\beta_{0}(\Delta) + O\left(\frac{(\log\log\Delta)^3}{(\log\Delta)^{2}}\right)
\end{eqnarray}
with the implied constant depending on $\phi$ and $K_0$. Finally, combining this with the expression for the 1-level density given in \ref{thm:oneleveldensity}, we obtain the full first lower-order term, completing the proof of Theorem \ref{thm:lowterm}.

\appendix
\section{$\Delta$-independence in Proposition \ref{prop:eulerbound}}\label{sec:appendix}
The purpose of this appendix is to prove the $\Delta$-independence alluded to after Proposition \ref{prop:eulerbound}. Namely, we have
\begin{prop}
Let $K$ be a CM-field of odd class number such that $K^+$ has strict class number 1, and let the values $\gamma = \gamma(K)$ and $M = M(K)$ associated to $K$ be defined as in Proposition \ref{prop:eulerbound} (note that $\gamma(K)$ is distinct from the number field Euler constant $\gamma_K$). Then we may bound $\gamma$ and $M$ by constants depending only on $K^+$.
\end{prop}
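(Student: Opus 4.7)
The plan is to bound $M$ and $\gamma$ separately, in each case exploiting two features of the CM setting: the unit index $[\mathcal{O}_K^\times : \mu_K\mathcal{O}_{K^+}^\times]$ is $1$ or $2$, and $\mathcal{O}_K$ is free of rank $2$ over $\mathcal{O}_{K^+}$ (since $h_{K^+}=1$).

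The $M$-bound is the easier of the two. Both $\mathcal{O}_K^\times$ and $\mathcal{O}_{K^+}^\times$ have rank $r=N-1$ (from the signatures $(0,N)$ and $(N,0)$ respectively), and the unit-index bound then forces a fundamental system $\epsilon_1,\ldots,\epsilon_{N-1}$ of $\mathcal{O}_K$ modulo torsion to consist of fundamental units $\eta_j$ of $\mathcal{O}_{K^+}$, with at most one replaced by an element $\eta_0\in\mathcal{O}_K^\times$ satisfying $\eta_0^2\in\mu_K\mathcal{O}_{K^+}^\times$. Each $|\epsilon_j^{(i)}|$ is then either $|\sigma(\eta)|$ or $|\sigma(\eta)|^{1/2}$ for some $\eta\in\mathcal{O}_{K^+}^\times$ and real $\sigma:K^+\hookrightarrow\mathbb{R}$, quantities depending only on $K^+$.

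For $\gamma$, I would use the integral basis $\{b_k\}\cup\{\alpha'b_k\}_{k=1}^N$, where $\{b_k\}$ is a fixed $\mathbb{Z}$-basis of $\mathcal{O}_{K^+}$ and $\alpha'=u\alpha_0-c$ for $\alpha_0=(1+\sqrt{\beta})/2$, $u\in\mathcal{O}_{K^+}^\times$, $c\in\mathcal{O}_{K^+}$ to be chosen; the discussion after Proposition \ref{prop:oddclassnumber} guarantees $\{1,\alpha'\}$ is an $\mathcal{O}_{K^+}$-basis of $\mathcal{O}_K$. Writing $\alpha'=A'+B'\sqrt{\beta}$ and $t_i=\sqrt{|\sigma_i(\beta)|}$, and ordering both embeddings and basis elements in conjugate pairs, the embedding matrix factors as $D\cdot(B_0\otimes I_2)$, where $B_0=(\sigma_i(b_k))$ is the $\mathcal{O}_{K^+}$-embedding matrix and $D$ is block-diagonal with $2\times 2$ blocks $\bigl(\begin{smallmatrix}1 & \sigma_i(A')+i\sigma_i(B')t_i\\ 1 & \sigma_i(A')-i\sigma_i(B')t_i\end{smallmatrix}\bigr)$. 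Each entry of the resulting $(\gamma_{ij})$ is then a single product of one entry from $B_0^{-1}$ (depending only on $K^+$) and one from the inverse of a $2\times 2$ block, the latter having entries of size $\tfrac{1}{2}+\tfrac{|\sigma_i(A')|}{2|\sigma_i(B')|t_i}$ or $\tfrac{1}{2|\sigma_i(B')|t_i}$. It therefore suffices to choose $u,c$ so that $|\sigma_i(A')|\leq R_0$ and $|\sigma_i(B')|t_i\geq c_0$ uniformly in $\Delta$, for constants depending only on $K^+$. For the lower bound, Dirichlet's unit theorem identifies the logarithmic image of $\mathcal{O}_{K^+}^\times$ with a rank-$(N-1)$ lattice in $\{\sum x_i=0\}\subset\mathbb{R}^N$ of covering radius depending only on $K^+$, so $u$ may be chosen making each $\log|\sigma_i(u)|+\log t_i$ lie within $O_{K^+}(1)$ of the mean $\tfrac{1}{2N}\log|N_{K^+/\mathbb{Q}}(\beta)|$; since $\beta\in\mathcal{O}_{K^+}\setminus\{0\}$ gives $|N_{K^+/\mathbb{Q}}(\beta)|\geq 1$, this mean is bounded below, forcing $|\sigma_i(u)|t_i\geq c_0>0$. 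With $u$ fixed, Minkowski's theorem lets us choose $c$ reducing $uA_0-c$ into a bounded fundamental domain of $\mathcal{O}_{K^+}\hookrightarrow\mathbb{R}^N$, giving the upper bound on $|\sigma_i(A')|$.

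The main obstacle is precisely this lower bound on $|\sigma_i(B')|t_i$. A naive Cramer's-rule estimate gives only $|\gamma_{ij}|\ll_N\Delta^{(N-1)/(2N)}$, since the embedding-matrix entries can grow like $\Delta^{1/(2N)}$ while $\det=\pm\sqrt{\Delta}$; it is the block factorization that exposes the cancellation needed for a uniform bound, and the inequality $|N_{K^+/\mathbb{Q}}(\beta)|\geq 1$ that converts "balanced geometric mean" into "uniformly bounded below." Provided the two balancing steps are carried out in the correct order---first $u$ by Dirichlet, then $c$ by Minkowski so that the multiplication by $u$ is consistently propagated into the real-part reduction---the estimates on $A'$ and $B'$ decouple, and $\gamma$ is bounded by a constant depending only on $K^+$.
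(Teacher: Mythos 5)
Your proof is correct, and although its skeleton parallels the paper's appendix --- the bound on $M$ comes from the Hasse unit index, so that (up to your index-$2$ case) fundamental units of $K^{+}$ serve as generators of $\mathcal{O}_{K}^{*}$ modulo roots of unity, and the bound on $\gamma$ comes from the same block structure of the embedding matrix for a basis of the form $\{b_k\}\cup\{\alpha b_k\}$ --- the decisive estimate is reached by a genuinely different argument. The paper bounds the block inverses by asserting that the absolute values $|\sqrt{\beta}^{(i)}|$ all coincide, hence each equals $(\Delta/D_{K^{+}/\mathbb{Q}}^{2})^{1/2N}$; you make no such claim and instead force uniform bounds by renormalizing the relative generator: a unit $u\in\mathcal{O}_{K^{+}}^{*}$, chosen via the covering radius of the log-unit lattice, balances the quantities $|\sigma_i(u)|\,|\sigma_i(\beta)|^{1/2}$ around $|N^{K^{+}}_{\mathbb{Q}}(\beta)|^{1/2N}\geq 1$, and a translation by $c\in\mathcal{O}_{K^{+}}$ keeps the real parts inside a bounded fundamental domain. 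This buys robustness: equality of the $|\sigma_i(\beta)|$ holds only for their product (the norm), not for the individual embeddings, and $\beta$ is in any case determined only up to squares of units, so your balancing by $u$ (in effect replacing $\beta$ by a reduced generator $u^{2}\beta$ of the ramified prime) is exactly what makes the bound uniform in $\Delta$; the paper's route is shorter because with $\alpha=(1+\sqrt{\beta})/2$ the real part is the rational number $1/2$, so no translation is needed. Two small points: under the stated hypothesis that $K^{+}$ has strict class number one, Okazaki's lemma gives Hasse unit index exactly $1$, so your index-$2$ case is vacuous here; and if you retain it for generality, you should reduce the extra generator so that $\eta_0^{2}$ is a root of unity times a product of the fixed fundamental units with exponents in $\{0,1\}$, since ``for some $\eta\in\mathcal{O}_{K^{+}}^{*}$'' by itself does not yield a constant depending only on $K^{+}$.
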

Thus, if we begin with a totally real field $K_0$ of strict class number 1 and consider the family $\{K_\Delta\}$ of all CM-fields of odd class number for which $K^{+} = K_0$, then
\be
\gamma(K_\Delta), M(K_\Delta)\ =\ O(1) \ {\rm as} \ \Delta\to\infty
\ee
with the implied constants depending on $K_0$. Actually, this is true even when $K$ has even class number, but that doesn't matter for us since there may be too many ramified primes.
\begin{proof}
Lemma 15 of \cite{Ok} implies that if $K_0$ is a totally real field of strict class number 1, then for any CM-field $K$ with $K^{+}=K_0$, the Hasse unit index $Q_K$ satisfies
\be
Q_{K}\ =\ [\mathcal{O}_{K}^{*} : W_{K}\mathcal{O}_{K_0}^{*}]\ =\ 1,
\ee
where $W_K$ is the group of roots of unity contained in $K$. Consequently, any independent set $\epsilon_{1},...,\epsilon_{r}$ of generators for $\mathcal{O}_{K_0}^{*}$ modulo $\{\pm1\}$ also serves as independent set of generators for $\mathcal{O}_{K}^{*}$ modulo $W_K$. This, together with the exact sequence
\be
1 \to {\rm Gal}(K/K_0) \to {\rm Gal}(K/\mathbb{Q}) \to {\rm Gal}(K_{0}/\mathbb{Q}) \to 1
\ee
implies that
\[
M(K)\ =\ \max_{\substack{1 \leq j \leq r \\ \sigma\in {\rm Gal}\;(K/\mathbb{Q})}}|\log|\sigma(\epsilon_{j})||
\]
depends only on $K_0$, as desired. \\
\indent To bound $\gamma(K)$, recall that $\mathcal{O}_{K} = \mathcal{O}_{K_0}[\alpha]$, where $\alpha = (1 + \sqrt{\beta})/2$ for $\beta\in\mathcal{O}_{K_0}$ a totally negative element. Thus, if $x_{1},...,x_{N}$ is an integral basis for $\mathcal{O}_{K_0}$ over $\mathbb{Q}$, then
\be
\twocase{\beta_j \ = \ }{x_j}{if $1 \le j \le N$}{\alpha x_{j-N}}{if $N+1 \le j \le 2N$}
\ee
is an integral basis for $\mathcal{O}_{K}$ over $\mathbb{Q}$. Consequently, the matrix $(\beta_{j}^{(i)})$ takes the block form
\be
(\beta_{j}^{(i)})\ =\ \left(\begin{array}{cc} X & AX \\ X & \overline{A}X\end{array}\right)
\ee
where $X = (x_{j}^{(i)})_{1 \leq i,j \leq N}$, $A$ is the diagonal matrix
\be
A\ =\ \left(\begin{matrix}\alpha^{(1)} & \\ & \alpha^{(2)} \\ & & \ddots \\ & & & \alpha^{(N)}\end{matrix}\right),
\ee
and we've used the fact that $x_{j}^{(i)} = x_{j}^{(i+N)}$ and $\alpha^{(i)} = \overline{\alpha^{(i+N)}}$ for $1\leq i \leq N$ since $K^{(i+N)} = \overline{K^{(i)}}$ and $K_0$ is totally real. It is then straightforward to check that the inverse of $(\beta_{j}^{(i)})$ is given in block form by
\be\label{eq:gamma}
(\gamma_{ij})\ =\ \left(\begin{matrix} X^{-1}\overline{A}(\overline{A}-A)^{-1} & -X^{-1}A(\overline{A} - A)^{-1} \\ -X^{-1}(\overline{A}-A)^{-1} & X^{-1}(\overline{A}-A)^{-1} \end{matrix}\right).
\ee
Note that the invertibility of $\overline{A} - A$ follows from the fact that $\alpha^{(i)} \neq \overline{\alpha^{(i)}}$ for any $i$; indeed, $\alpha^{(i)} = (1 + \sqrt{\beta}^{(i)})/2$, and $\sqrt{\beta}^{(i)}$ is purely imaginary since $\beta$ is totally negative. Also, $X$ is invertible since the integral basis $x_{1},...,x_{N}$ is linearly independent over $\mathbb{Q}$. Consequently, to bound $\gamma = \max_{1\leq i,j\leq 2N}|\gamma_{ij}|$ solely in terms of $K_0$, it suffices to so bound the entries of each of the matrices $(\overline{A} - A)^{-1}, A(\overline{A} - A)^{-1}$, and $\overline{A}(\overline{A} - A)^{-1}$. \\
\indent Recall from the beginning of Section \ref{sec:3} that
\be
|N^{K_0}_{\mathbb{Q}}(\beta)|\ =\ \frac{\Delta}{D^{2}_{K_{0}/\mathbb{Q}}}.
\ee
Moreover, $|N^{K}_{\mathbb{Q}}(\sqrt{\beta})|\ =\ |N^{K_0}_{\mathbb{Q}}(N^{K}_{K_0}(\sqrt{\beta}))| = |N^{K_0}_{\mathbb{Q}}(\beta)|$. But by definition
\be
N^{K}_{\mathbb{Q}}(\sqrt{\beta})\ =\ \prod_{\substack{K \hookrightarrow K^{(i)} \\ 1\leq i \leq 2N}}\sqrt{\beta}^{(i)}
\ee
and since $K$ is CM, we have $\left|\sqrt{\beta}^{(i)}\right| = |\sqrt{\beta}^{(j)}|$ for all $i,j$ (cf. \cite{Wa}, pg. 38). Therefore, since $|N^{K}_{\mathbb{Q}}(\sqrt{\beta})| = \Delta/D^{2}_{K_{0}/\mathbb{Q}}$, we find that
\be
\left|\sqrt{\beta}^{(i)}\right|\ =\ \left(\frac{\Delta}{D^{2}_{K_{0}/\mathbb{Q}}}\right)^{1/2N}
\ee
for any $i$. This in fact implies the desired bound on the entries of the matrices in question: we have
\be
(\overline{A} - A)^{-1}\ =\ \left(\begin{matrix}(\overline{\alpha^{(1)}} - \alpha^{(1)})^{-1} & \\ & (\overline{\alpha^{(2)}} - \alpha^{(2)})^{-1} \\ & & \ddots \\ & & & (\overline{\alpha^{(N)}} - \alpha^{(N)})^{-1}\end{matrix}\right)
\ee
and for any $i$, we have (since $\alpha^{(i)} = (1 + \sqrt{\beta}^{(i)})/2$)
\begin{eqnarray}
|(\overline{\alpha^{(i)}} - \alpha^{(i)})^{-1}| &\ =\ & \left|\sqrt{\beta}^{(i)}\right|^{-1} \nonumber\\
& = & \left|\frac{D^{2}_{K_{0}/\mathbb{Q}}}{\Delta}\right|^{1/2N} \nonumber\\
& \leq  & |D_{K_{0}/\mathbb{Q}}|^{1/N}.
\end{eqnarray}
For the matrices $A(\overline{A} - A)^{-1}$ and $\overline{A}(\overline{A} - A)^{-1}$, we have for any $i$
\begin{eqnarray}
|\alpha^{(i)}(\overline{\alpha^{(i)}} - \alpha^{(i)})^{-1}| &\ \leq\ & \frac{1 + \left|\sqrt{\beta}^{(i)}\right|}{2\left|\sqrt{\beta}^{(i)}\right|} \nonumber\\
& \leq & \frac{1}{2} + |D_{K_{0}/\mathbb{Q}}|^{1/N}
\end{eqnarray}
and we get the same bound for the entries of $\overline{A}(\overline{A} - A)^{-1}$ since $|\alpha^{(i)}| = |\overline{\alpha^{(i)}}|$.
\end{proof}

\ \\

\end{document}